\newfont{\bb}{msbm10 at 10pt}
\newfont{\bbsmall}{msbm8 at 8pt}
\newtheorem{teorema}{Theorem}
\newtheorem{lema}{Lemma}
\newtheorem{corolario}{Corollary}
\newtheorem{definicion}{Definition}
\newtheorem{nota}{Remark}
\newtheorem*{ejemplo*}{Examples}
\newtheorem{claim}{Claim}
\def\n{\hbox{\bb N}}
\def\r{\hbox{\bb R}}
\def\fl{\longrightarrow}
\def\c{\hbox{\bb C}}
\def\cA{\mathcal{A}}
\def\cB{\mathcal{B}}
\def\cO{\mathcal{O}}
\def\cV{\mathcal{V}}
\def\cH{\mathcal{H}}
\let\hat=\widehat
\let\tilde=\widetilde
\let\alfa=\alpha
\def\flecha{\rightarrow}
\def\esiz{\langle}
\def\esde{\rangle}
\def\cte.{\mathop{\rm cte.}\nolimits}
 \def\Im{\mathop{\rm Im }\nolimits}
\def\N{\mathbb{N}}
\def\L{\mathbb{L}}
\def\A{\mathbb{A}}
\def\Z{\mathbb{Z}}
\def\R{\mathbb{R}}
\def\C{\mathbb{C}}
\def\D{\mathbb{D}}
\def\H{\mathbb{H}}
\def\S{\mathbb{S}}
\newcommand{\beq}{\begin{equation}}
\newcommand{\eeq}{\end{equation}}
\numberwithin{equation}{section}
\begin{document}

\begin{title}[Isolated singularities of the prescribed mean curvature equation]{Isolated singularities of the prescribed mean curvature equation in Minkowski $3$-space}
\end{title}
\today
\author{José A. Gálvez}
\address{José A. Gálvez, Departamento de Geometría y Topología,
Universidad de Granada, 18071 Granada, Spain}
 \email{jagalvez@ugr.es}

\author{Asun Jiménez}
\address{Asun Jiménez, Departamento de Geometria, IME,
Universidade Federal Fluminense, 24.020-140  Niterói, Brazil}
 \email{asunjg@vm.uff.br}

\author{Pablo Mira}
\address{Pablo Mira, Departamento de Matemática Aplicada y Estadística, Universidad Politécnica de
Cartagena, 30203 Cartagena, Murcia, Spain.}

\email{pablo.mira@upct.es}

\thanks{
\textit{Keywords and phrases:} quasilinear elliptic equation, isolated singularity, prescribed mean
curvature, boundary regularity.\\
{\it 2010 Mathematics Subject Classification:} 35J62, 53C42.\\
The authors were partially supported by
MINECO, Grant No. MTM2016- 80313-P, Junta de Andalucía Grant No.
FQM325, Programa de Apoyo a la Investigación, Fundación Seneca-Agencia de Ciencia y Tecnología
Región de Murcia, reference 19461/PI/14  
and Conselho
Nacional de Desenvolvimento Científico e Tecnológico - CNPq - Brasil, reference 405732/2013-9.}



\maketitle

\begin{center}
\begin{abstract}
We give a classification of non-removable isolated singularities for real analytic solutions of the prescribed mean curvature equation in Minkowski $3$-space.
\end{abstract}
\end{center}

\section{Introduction}

In this paper we study non-removable isolated singularities of the following quasilinear, non-uniformly elliptic PDE in two variables:

\begin{equation}\label{maineq}
(1-z_y^2) z_{xx} + 2 z_x z_y z_{xy} + (1-z_x^2) z_{yy} = 2 \cH (1-z_x^2-z_y^2)^{3/2},
\end{equation}
where $\cH=\cH(x,y,z)$ is a $C^k$ positive function ($k\geq 1$) and $z=z(x,y)$ satisfies the ellipticity condition $z_x^2+z_y^2<1$. The solutions of this equation have a geometric interpretation, since they represent spacelike graphs of prescribed mean curvature $\cH$ in the Lorentz-Minkowski space $\L^3$.

More specifically, we will consider elliptic solutions $z(x,y)$ to \eqref{maineq} that are $C^2$ on a punctured disk $$\Omega=\{(x,y): (x-x_0)^2+(y-y_0)^2<\rho^2\}\subset \R^2,$$ and do not extend smoothly to the puncture. The radius $\rho$ of $\Omega$ is irrelevant here, since we will consider two solutions $z_1,z_2$ in the above conditions to be equal if they coincide in a punctured neighborhood of $(x_0,y_0)$. 

Our aim is to describe in detail the asymptotic behavior of elliptic solutions $z(x,y)$ to \eqref{maineq} around such a non-removable isolated singularity, and to classify the associated moduli space.

In \cite{Bar} Bartnik proved that if $z(x,y)$ is an elliptic solution to \eqref{maineq} that presents a non-removable singularity at the origin, then it has a \emph{conelike} behavior: $z$ extends continuously to the origin (say, $z(0,0)=0$), and $$\lim_{(x,y)\to(0,0)} \frac{|z(x,y)|}{\sqrt{x^2+y^2}} =1,$$ i.e. $z(x,y)$ is asymptotic at the origin to the upper or lower \emph{light cone}; see e.g. Figure \ref{f1} for an example of this situation.

The case $\cH=0$ in \eqref{maineq} corresponds to the well-studied situation of maximal surfaces in $\L^3$. A description of the asymptotic behavior of maximal surfaces around non-removable isolated singularities was obtained by Fernández, López and Souam through the Weierstrass representation of such surfaces, see Lemma 2.1 in \cite{FLS2}. For other works about isolated singularities of maximal surfaces, see  \cite{Ec}, \cite{Ko}, \cite{FLS1}, \cite{EsRo}, \cite{KlMi}, \cite{UmYa}.

 In the constant mean curvature case we refer to \cite{Br} and \cite{Um} for previous results concerning singular surfaces. 

Our main result in this paper gives a classification of the elliptic solutions to \eqref{maineq} that have a non-removable isolated singularity at the origin, in the case that $\cH=\cH(x,y,z)$ is real analytic and the curvature of the graph $z=z(x,y)$ in $\L^3$ does not vanish around the singularity:

 \begin{teorema}\label{main1}
  Let $\cO\subset \R^3$ be a neighborhood of some point $p_0=(x_0,y_0,z_0)$, and let $\cH\in C^{\omega}(\cO)$, $\cH>0$.
 
 Let $\cA_1$ denote the class of all elliptic solutions $z(x,y)$ to \eqref{maineq} that satisfy the following conditions:
  \begin{enumerate}
  \item
$z\in C^{2}(\Omega)$, where $\Omega\subset \R^2$ is some punctured disk (of any radius) centered at $(x_0,y_0)$.
 \item
$z\in C^0(\overline{\Omega})$, with $z(x_0,y_0)= z_0$.
 \item
The Hessian determinant $z_{xx} z_{yy} -z_{xy}^2$ does not vanish around $(x_0,y_0)$.
 \end{enumerate}
Here, we identify two elements of $\cA_1$ if they coincide on a neighborhood of $(x_0,y_0)$. 

Then, there exists an explicitly constructed bijective correspondence $\cA_1 \mapsto \cA_2$ between the class $\cA_1$ and the class $\cA_2$ of $2\pi$-periodic, real analytic, nowhere vanishing functions $A(u):\R\flecha \R\setminus \{0\}$.
\end{teorema}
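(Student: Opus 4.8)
The plan is to analyze an elliptic solution $z(x,y)$ near its isolated singularity through the geometry of the associated spacelike surface $S$ in $\L^3$, exploiting that $\cH$ real analytic makes $S$ a real analytic surface away from the puncture, and that the non-vanishing Hessian hypothesis (3) makes the second fundamental form of $S$ definite, so $S$ is \emph{locally convex}. First I would use Bartnik's conelike behavior (already recalled in the Introduction): $z$ extends continuously with $z(x_0,y_0)=z_0$ and $S$ is asymptotic to a light cone with vertex at $p_0$. After translating $p_0$ to the origin, I would pass to the Gauss map $N:S\to\H^2$ (the hyperbolic Gauss map, valued in the hyperboloid), which for a locally convex spacelike surface is a local diffeomorphism onto its image by condition (3); the conelike asymptotics force the image $N(S)$ to be a punctured neighborhood of an ideal point of $\H^2$, or more precisely the Gauss map ``closes up'' around the singularity so that $N$ realizes $S$ as a graph over an annular region of $\H^2$ winding once around a point at infinity.

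Next I would introduce a conformal parameter. Since $S$ is real analytic with definite second fundamental form, $I\!I$ itself (or, more convenient near a light-cone asymptotic, the third fundamental form $I\!I\!I$ pulled back from $\H^2$) is a real analytic Riemannian metric on the punctured surface; I would take a conformal coordinate for it, obtaining a parametrization of $S$ on a punctured disk $\{0<|w|<\delta\}$ (or, after passing to the universal cover and using the winding-once property, on a half-cylinder $\R/2\pi\Z \times (0,\infty)$ with coordinates $(u,t)$, $u$ being $2\pi$-periodic). The asymptotic conelike behavior should translate into precise decay/growth of the conformal factor and of the immersion as $t\to\infty$. The function $A(u)$ of class $\cA_2$ is then extracted as the ``leading coefficient'' in this asymptotic expansion — essentially the boundary value, as $t\to\infty$, of a suitably renormalized geometric quantity (curvature or support-type function) along the cylinder; its $2\pi$-periodicity is exactly the winding-once condition, its real analyticity comes from analyticity of $\cH$ and elliptic regularity, and its nowhere-vanishing comes from hypothesis (3) (the Hessian, hence the Gauss curvature of $I\!I\!I$ type data, never degenerating). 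Conversely, given any $A\in\cA_2$, I would set up the PDE/ODE system that the immersion must satisfy in the $(u,t)$ cylinder coordinates — a second-order elliptic system with $\cH$ as a real analytic nonlinearity — prescribe $A(u)$ as asymptotic data at $t=\infty$, and solve it (Cauchy–Kovalevskaya in the analytic category, or a fixed-point/ODE-integration argument in $t$) to produce a unique elliptic solution of \eqref{maineq} with that singularity; real analyticity of $\cH$ is what guarantees both existence and uniqueness of this ``analytic continuation from infinity,'' giving the inverse map $\cA_2\to\cA_1$.

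The map $\cA_1\to\cA_2$ and its inverse being mutually inverse — hence the bijection — would follow by checking that the construction of $A$ from $z$ and the reconstruction of $z$ from $A$ use the same normalized asymptotic data, together with a uniqueness statement for solutions of the cylinder system with prescribed asymptotics at infinity (again an analyticity-driven unique-continuation argument). I expect the main obstacle to be precisely this last point: establishing that the asymptotic datum $A(u)$ determines the solution uniquely, i.e. that two elliptic solutions with the same conelike singularity and the same renormalized leading term must coincide near the puncture. This requires controlling the full asymptotic expansion of $z$ (or of the immersion) at the light-cone singularity and showing all higher-order terms are forced by $A$ through the equation — a delicate matching of a singular ODE/PDE analysis near $t=\infty$ with the real analytic structure, and the place where the hypothesis $\cH\in C^\omega$ (rather than merely $C^k$) is essential. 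A secondary technical point is verifying that every $2\pi$-periodic nowhere-vanishing analytic $A$ actually arises — that the reconstructed surface is genuinely singular (non-removable) at the origin and satisfies hypotheses (1)–(3) — which again reduces to reading off the conelike behavior and the non-degenerate Hessian from the explicit asymptotic form built into the construction.
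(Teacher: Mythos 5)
Your overall philosophy (conformal reparametrization near the singularity, extraction of a $2\pi$-periodic analytic function as singular data, reconstruction by an analytic Cauchy-type argument) is in the spirit of the paper, but the central mechanism is missing, and it is exactly the point you yourself flag as the ``main obstacle.'' You place the singularity at $t=\infty$ of a half-cylinder (using a conformal coordinate for $I\!I$ or $I\!I\!I$) and propose to prescribe $A(u)$ as \emph{asymptotic data at infinity}, to be solved by ``Cauchy--Kovalevskaya \dots or a fixed-point/ODE-integration argument in $t$.'' Cauchy--Kovalevskaya requires data on a finite non-characteristic real-analytic curve, not leading-order asymptotics at an end, and there is no general existence/uniqueness theory for prescribing the leading term of a solution of a quasilinear elliptic system at an end; consequently both surjectivity and, as you concede, injectivity (that $A$ determines $z$) are left unproved in your plan. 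The paper's resolution is structural and removes the problem at infinity altogether: working with conformal coordinates for the \emph{induced metric}, it first proves that a non-removable singularity forces the conformal type of the punctured graph to be an annulus rather than a punctured disk (this is where hypothesis (3) is really used, via univalence and properness of the gradient map $\mu=(z_x,z_y)$ and a $W^{1,2}$ bound combined with the Courant--Lebesgue lemma; see Lemmas \ref{univ} and \ref{anillo}), so the singularity becomes a \emph{finite} boundary circle $\{v=0\}$ of a strip $\Gamma_R$. A boundary regularity step (Heinz's theorem applied to $|\Delta\psi|\le c|\nabla\psi|^2$, then M\"uller's analyticity result, Lemma \ref{extanalitica}) shows the conformal immersion extends real-analytically \emph{across} this circle, with $\psi(u,0)=0$ and $\psi_v(u,0)=b(u)$ a regular spacelike Jordan curve in the null cone (Lemma \ref{bu}); after fixing the conformal gauge by normalizing the boundary Gauss map to $g(u,0)=\cos u+i\sin u$ (a normalization your ``renormalized leading term'' glosses over, although $b$ genuinely depends on the choice of conformal parameter), one writes $b(u)=A(u)(\cos u,-\sin u,1)$. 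Both directions of the bijection then reduce to the classical Cauchy--Kovalevskaya theorem for the analytic system $\Delta\psi=2\,\cH(\psi)\,\psi_u\times\psi_v$ with this honest Cauchy data on $\{v=0\}$, as in \eqref{cauchy}.

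So the concrete gap is twofold: (i) you never show that the singular set can be realized as a finite analytic boundary across which the parametrized surface extends analytically --- without that, prescribing data ``at $t=\infty$'' cannot be implemented by any quoted theorem, and the uniqueness you identify as the crux remains open; and (ii) you do not address the removability dichotomy (punctured-disk versus annulus conformal type), which is both where condition (3) enters the paper's argument and what guarantees that the reconstructed solutions are genuinely non-removable, nor do you verify in the existence direction that the solution built from an arbitrary $A\in\cA_2$ is a spacelike \emph{graph} over a punctured disk winding exactly once (the paper's Claims \ref{inm} and \ref{emb}). Fixing (i) essentially forces you back onto the paper's route.
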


Let us make some comments about this theorem. The correspondence $\cA_1 \mapsto \cA_2$ in the statement of Theorem \ref{main1} associates to each solution $z(x,y)$ a function $A(u)\in \cA_2$ that describes the asymptotic behavior of the solution at the singularity, when we parametrize the graph $z=z(x,y)$ with respect to adequate conformal coordinates $(u,v)$. The injectivity of the bijective correspondence $\cA_1 \mapsto \cA_2$ implies that $A(u)$ uniquely determines the solution $z(x,y)$, while surjectivity gives a general existence theorem for non-removable isolated singularities to \eqref{maineq}.

We should note that the real analyticity of $\cH$ is used in the proof in order to ensure the previous existence and uniqueness properties. However, in the proof of Theorem \ref{main1} we will also provide a detailed description of the asymptotic behavior of the solution $z$ around a non-removable singularity, in the general case that $\cH$ is only of class $C^{k,\alfa}$.
 
The fact that $z\in C^0(\overline{\Omega})$ in condition (2) of the statement of Theorem \ref{main1} follows from ellipticity. Hence, this second condition simply prescribes, for definiteness, that the continuous extension to $z$ to the puncture has value $z_0$.

Condition (3) in the statement of Theorem \ref{main1} is equivalent to the property that the Gaussian curvature of the graph $z=z(x,y)$ does not vanish around the singularity. We should point out that all known examples of non-removable isolated singularities of \eqref{maineq} (in particular, all examples given by the existence part in Theorem \ref{main1}) have the property that their Gaussian curvature blows up at the singularity. In this sense, it is tempting to conjecture that Theorem \ref{main1} should still hold without this curvature condition.

The proof of Theorem \ref{main1} is inspired by ideas developed by the authors in their previous works \cite{GJM1,GJM2} on isolated singularities of elliptic Monge-Ampère equations, and depends on a blend of ideas from surface theory, complex analysis and elliptic theory. The basic idea is to reparametrize the graph of any solution to \eqref{maineq} with respect to adequate conformal parameters, so that the punctured disk where the graph is defined transforms under this reparametrization to an annulus (with one boundary component of the annulus corresponding to the singularity), and so that \eqref{maineq} transforms under this process to a quasilinear elliptic system. In these new coordinates, we will solve a Cauchy problem for this quasilinear elliptic system with adequate initial data, and the solution will provide the desired results about existence, uniqueness and asymptotic behavior of the solution.

We should also observe that there are crucial differences of the situation treated in this paper with respect to our previous works \cite{GJM1,GJM2} on elliptic Monge-Ampère equations. In fact, even though \eqref{maineq} is quasilinear, it is not uniformly elliptic. More specifically, in our situation the uniform ellipticity condition is lost precisely at the isolated singularity, and this is a source of considerable complication, specially in the characterization of the conformal structure of a non-removable isolated singularity to \eqref{maineq}.

Another difference with respect to previous works concerns the role of limit cones. Indeed, any elliptic solution to the Monge-Ampère equation ${\rm det}(D^2u)= f(x,u,Du)$ has positive curvature, and it converges to a limit convex cone at any non-removable isolated singularity, see \cite{GJM1}. This cone describes the asymptotic behavior of the solution at the singularity. In contrast, any solution to \eqref{maineq} is asymptotic to the light cone, so in this case the \emph{tangent cone} at the singularity does not distinguish different solutions of \eqref{maineq}. In this sense, some other object other than tangent cones was needed in order to describe the moduli space of non-removable isolated singularities of \eqref{maineq}.

We have organized the paper as follows. In Section \ref{sec:pre} we introduce basic geometric notions and equations regarding spacelike surfaces of prescribed mean curvature in $\L^3$, when parameterized with respect to conformal parameters. 

In Section \ref{sec:rem} we prove that if $z(x,y)$ is a solution to \eqref{maineq} with an isolated singularity at the origin, and whose Gaussian curvature does not vanish around the singularity, then the singularity is removable if and only if the associated conformal structure of the graph $z=z(x,y)$ is that of a punctured disk. This is the only place of the proof where the curvature condition (3) in Theorem \ref{main1} is needed. 

In Section \ref{sec:nonr} we will introduce a special conformal parametrization $(u,v)$, defined on a horizontal quotient strip, around any non-removable isolated singularity of \eqref{maineq}, and we will show that the asymptotic behavior of the solution around the singularity depends on a certain periodic real function $A(u)$. This will allow us to define the map $\cA_1\mapsto \cA_2$ in the statement of Theorem \ref{main1}. 

In Section \ref{sec:exis} we will show how to reverse this process, by constructing from a real analytic periodic real function $A(u)$ an associated solution to \eqref{maineq} with a non-removable isolated singularity. Finally, in Section \ref{sec:class} we will gather the results of the previous sections to prove Theorem \ref{main1}, and we will discuss some consequences and particular examples.

\section{Surfaces of prescribed mean curvature in $\L^3$}\label{sec:pre}

Let $\L^3$ denote the Minkowski three-dimensional space, i.e. the vector space $\R^3$ endowed with the Lorentzian metric
\beq\label{L3}\langle \cdot,\cdot\rangle=dx^2+dy^2-dz^2\eeq in canonical coordinates $(x,y,z)$ of $\R^3$. An immersion $\psi:M^2\flecha \L^3$ from a   surface into $\L^3$ is called \emph{spacelike} if the metric $ds^2:=\psi^*(\langle \cdot,\cdot\rangle)$ induced on $M$ via $\psi$ is Riemannian. Equivalently, the surface is spacelike if its tangent plane is a spacelike plane of $\L^3$ at every point. 

A spacelike surface in $\L^3$ is locally a graph with respect to any timelike direction of $\L^3$; in particular, we may view $\psi$ around each point $p\in M$ as a map $\psi(x,y)=(x,y,z(x,y))$ where the graph function $z$  satisfies $z_x^2+z_y^2<1$ at every point due to the spacelike condition.

If $\psi:M\flecha \L^3$ is a spacelike surface, we will denote its upwards-pointing unit normal vector field as $G=(G_1,G_2,G_3):M\rightarrow\H^2=\{(x,y,z)\in\L^3: \,z>0,\, x^2+y^2-z^2=-1\}$. Note that $G$ takes values in the hyperbolic space $\H^2$, and so it is a unit timelike vector field at every point. The \emph{Gauss map} of $\psi$ is given by the composition 
 \begin{equation}\label{defga}
g=\pi\circ G:M\rightarrow \D:=\{(x,y): x^2+y^2<1\}, 
 \end{equation}
 where $\pi:\H^2\rightarrow \D$ is the stereographic projection in $\L^3$, given by, $\pi(x,y,z)=(x-iy)/(1+z)$.

 If $\psi:M\flecha \L^3$ is a spacelike surface, we can regard naturally $M$ as a Riemann surface with the conformal structure determined by its induced metric $ds^2$. Let $w=u+iv$ be a conformal parameter on $M$, so that $ds^2=\Lambda |dw|^2$ for some positive function $\Lambda$. It is well known then that $\psi$ satisfies in terms of the conformal parameters $(u,v)$ the quasilinear elliptic system
 \beq\label{lapinm}\psi_{uu}+\psi_{vv}=2H\psi_{u}\times\psi_{v},\eeq
 where $\times$ is the standard cross product in $\L^3$, given by $\esiz a\times b,c\esde = -{\rm det} (a,b,c)$, and $H$ is the mean curvature function of the surface.

In \cite{AkNi} Akutagawa and Nishikawa studied the Gauss map of a conformally parametrized spacelike surface in $\L^3$ in terms of its Gauss map. In particular, if $w=u+iv$ denotes a complex conformal parameter for a spacelike surface $\psi:M\flecha \L^3$, it was proved in \cite{AkNi} that the Gauss map $g$ and the mean curvature $H$ of $\psi$ satisfy the complex elliptic PDE
 \beq\label{genharmonic}
 H\left(\frac{\partial^2 g}{\partial w\overline{w}}+\frac{2\overline{g}}{1-|g|^2}\frac{\partial g}{\partial w}\frac{\partial g}{\partial \overline{w}}\right)=\frac{\partial H}{\partial w}\frac{\partial g}{\partial\overline{w}}.
 \eeq
 
In particular, if $H$ is a constant different from zero,   \eqref{genharmonic} reduces to
 \beq\label{harmonic}   \frac{\partial^2 g}{\partial w\overline{w}}+\frac{2\overline{g}}{1-|g|^2}\frac{\partial g}{\partial w}\frac{\partial g}{\partial \overline{w}} =0,\eeq that is, $g:M\flecha \D$ becomes a harmonic map into the Poincaré disk.
 
Akutagawa and Nishikawa also gave in \cite{AkNi} a Weierstrass-type representation formula for conformally immersed spacelike surfaces $\psi:M\flecha \L^3$ of non-zero mean curvature that allows to recover the immersion $\psi$ in terms of its Gauss map $g$ and its mean curvature function $H$ as
\beq\label{rep}
  \def\arraystretch{1.5}\begin{array}{l}x_w=\displaystyle{\frac{\overline{g}_w(1+g^2)}{H(1-|g|^2)^2}}\\
  y_w=\displaystyle{-i\frac{\overline{g}_w(1-g^2)}{H(1-|g|^2)^2}}\\
  z_w=\displaystyle{\frac{2\overline{g}_wg}{H(1-|g|^2)^2}}.
  \end{array} 
 \eeq

Assume now that the spacelike surface $\psi:M\flecha \L^3$ is a graph $z=z(x,y)\in C^3(U)$ over some planar domain $U\subset \R^2$ (this is the case if $z(x,y)$ is a $C^2$ solution to (\ref{maineq}) and $\cH=\cH(x,y,z)$ is $C^{1,\alpha}$ by \cite{Ni}). In this situation, by the uniformization theorem for nonanalytic metrics \cite{Sau} we know that the change of coordinates 
\begin{equation}\label{tristar}
\Phi:U\rightarrow  \mathcal{G}:=\Phi(U)\subset \R^2 ,\qquad
(x,y)\mapsto \Phi(x,y)=(u(x,y),v(x,y)),
\end{equation}
 that relates the parameters $(x,y)$ with the conformal coordinates $(u,v)$ is a $C^{2}$ diffeomorphism with positive Jacobian. Also, if we denote
 \beq\label{coef}
a=1-z_y^2, \hspace{0.4cm} b= z_xz_y , \hspace{0.4cm}
c=1-z_x^2, \hspace{0.4cm} \Sigma= ac-b^2 = 1-z_x^2-z_y^2>0,\eeq
and view $a,b,c,\Sigma$ as functions depending on the coordinates $(u,v)$ via the inverse change $\Phi^{-1}$, the following equivalent Beltrami systems are satisfied
\vspace{0.2cm}
 
 \beq\label{sistxy} \left(
 \begin{array}{c}x_v\\y_v
  \end{array}\right)=\frac{1}{\sqrt{\Sigma}}\left(
 \begin{array}{cc}b &-a\\c &-b 
 \end{array}\right) \left(
 \begin{array}{l}x_u\\y_u 
 \end{array}\right), \qquad \left(
 \begin{array}{c}y_u\\y_v
  \end{array}\right)=\frac{1}{a}\left(
 \begin{array}{cc}b &-\sqrt{\Sigma}\\ \sqrt{\Sigma} &b 
 \end{array}\right) \left(
 \begin{array}{l}x_u\\x_v 
 \end{array}\right).
\eeq 

   Note that if $U\subset \R^2$ is a punctured disk,   $\mathcal{G}$ will be a domain in $\R^2\equiv \C$ that is conformally equivalent to either the punctured disk or a certain annulus.

\section{Removability of isolated singularities of prescribed mean curvature}\label{sec:rem}

 \begin{definicion}
Let $\cH$ be a function defined on an open set $\cO\subset\L^3$, and $\psi:M\flecha \cO\subset\L^3$ be a $C^2$ spacelike surface in $\L^3$ with upwards-pointing unit normal. We say that $\psi$ has \emph{prescribed mean curvature $\cH$} if the mean curvature function $H$ of $\psi$ satisfies $H(p)=\cH(\psi(p))$ for every $p\in M$.
 \end{definicion}

Note that if a spacelike graph $(x,y,z(x,y))$, $(x,y)\in U\subset \R^2$, has as a prescribed mean curvature a locally $C^{k,\alpha}$ function $\cH$ then $z(x,y)$ is a solution to \eqref{maineq} that satisfies the ellipticity condition $z_x^2+z_y^2 <1$ on $U$.
 
From now on we will work with solutions $z(x,y)$ to  \eqref{maineq} in a punctured neighborhood of the origin. Observe that in this case the ellipticity condition $z_x^2+z_y^2<1$ implies that $z(x,y)$ has a continuous extension to the origin. Hence we will suppose that $z(0,0)=0$, the open set $\cO\subset \L^3$   contains the origin and $z(x,y)$ is defined over a 
 punctured disk $\Omega=\{(x,y): 0<x^2+y^2<\rho^2\}$.
 
  The following lemma concerns the asymptotic behavior of the gradient of $z$.

\begin{lema} \label{univ}
Let $z\in C^2(\Omega)$ be an elliptic solution to \eqref{maineq} defined on the punctured disk $\Omega\subset \R^2$, and assume that the Gaussian curvature of the spacelike graph $z=z(x,y)$ in $\L^3$ does not vanish around the singularity.  If the singularity at the puncture is non removable, then the   map $ \mu$ into the open unit disk given by $ \mu (x,y)=(z_x(x,y),z_y(x,y))$ is univalent and proper in a  closed  punctured neighborhood of the origin.
\end{lema}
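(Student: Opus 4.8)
\emph{Strategy and local injectivity.} I would establish the three assertions in turn: that $\mu$ is a local diffeomorphism, that it is proper, and that it is globally one-to-one near the puncture. The first is immediate. Since $D\mu$ is the Hessian matrix of $z$, the Jacobian of $\mu$ equals $z_{xx}z_{yy}-z_{xy}^2$, which by the Gauss equation for spacelike graphs in $\L^3$ vanishes exactly where the Gaussian curvature of the graph $z=z(x,y)$ does. Shrinking $\Omega$ if necessary, it is therefore continuous and nowhere zero on the connected punctured disk $\Omega$, hence of constant sign, and $\mu$ is a local diffeomorphism of $\Omega$ onto an open subset of $\D$ — globally orientation preserving or globally orientation reversing. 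In particular $\mu$ is locally injective, and what remains is a statement about its global behaviour near the puncture.

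\emph{Properness.} I would next show that $\Sigma:=1-z_x^2-z_y^2\to 0$, i.e.\ $|\mu(x,y)|=\sqrt{z_x^2+z_y^2}\to 1$, as $(x,y)\to(0,0)$. This is precisely the assertion that $\mu$, restricted to a closed punctured disk $\{0<x^2+y^2\le\delta^2\}$ — a compact set with one point removed — is a proper map into $\D$: the preimage of a compact $K\subset\D$ is then closed and bounded away from the origin, hence compact. Suppose instead that $\Sigma\ge\varepsilon_0>0$ on some punctured neighbourhood of the origin; then \eqref{maineq} is uniformly elliptic there with bounded coefficients, and, since $z\in C^0(\overline{\Omega})$, interior elliptic estimates together with the standard removability argument for quasilinear elliptic equations would yield a smooth extension of $z$ across the origin, contradicting non-removability. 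Hence $\inf_{\Omega}\Sigma=0$; combining this with Bartnik's analysis of the singularity in \cite{Bar} — the conelike behaviour $|z(x,y)|/\sqrt{x^2+y^2}\to 1$ and the gradient bounds that underlie it, which propagate the smallness of $\Sigma$ uniformly over the circles $\{x^2+y^2=r^2\}$ as $r\downarrow 0$ — one gets $\Sigma(x,y)\to 0$.

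\emph{Global injectivity near the puncture.} Here the new ingredient is the behaviour of the \emph{direction} of $\nabla z$: the conelike structure yields that $\mu(x,y)$ is asymptotic to $\pm(x,y)/\sqrt{x^2+y^2}$, in a sense strong enough that, for $r$ small, the closed curve $\gamma_r:=\mu(\{x^2+y^2=r^2\})\subset\D$ is a small perturbation of the unit circle traversed once; in particular, for $\delta$ small enough, $\gamma_\delta$ is an embedded Jordan curve winding once about the origin of $\D$, and every $\gamma_r$ with $0<r\le\delta$ winds once about the origin, with the same orientation. Fix such a $\delta$. Given $q\in\D$ off the null set $\gamma_\delta$, note that $q\notin\gamma_r$ once $r$ is small (as $\gamma_r\to\partial\D$); applying the argument principle to the local diffeomorphism $\mu$ on the annuli $\{r^2<x^2+y^2<\delta^2\}$ and letting $r\downarrow 0$ gives
\[
\#\{p:\ 0<x^2+y^2<\delta^2,\ \mu(p)=q\}=\bigl|\,\mathrm{wind}(\gamma_\delta,q)-m\,\bigr|,
\]
where $\mathrm{wind}(\gamma_\delta,q)$ is the winding number of $\gamma_\delta$ about $q$ and $m:=\lim_{r\downarrow 0}\mathrm{wind}(\gamma_r,q)\in\{+1,-1\}$ is independent of $q\in\D$. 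Since $\gamma_\delta$ is embedded and winds once about the origin with that same orientation, $\mathrm{wind}(\gamma_\delta,q)\in\{0,m\}$, so the right-hand side is $0$ or $1$. Thus $\mu$ assumes each value in $\D\setminus\gamma_\delta$ at most once on $\{0<x^2+y^2<\delta^2\}$; since the set of values assumed at least twice is open and cannot meet the null set $\gamma_\delta$, it is empty, so $\mu$ is injective on $\{0<x^2+y^2<\delta^2\}$. Together with properness, this proves the lemma on $\{0<x^2+y^2\le\delta_1^2\}$ for any $\delta_1<\delta$.

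\emph{The main obstacle.} The delicate part is the asymptotic input of the previous paragraph: the control of the curves $\gamma_r$ needed to pin their winding number, and the embeddedness of $\gamma_\delta$ for small $\delta$, go beyond the $C^0$ conelike behaviour of \cite{Bar} and cannot be obtained from routine interior estimates, since \eqref{maineq} loses uniform ellipticity \emph{precisely} at the singularity; they must be extracted from Bartnik's finer study of the conelike singularity. The curvature hypothesis, by contrast, is used only to keep $\mu$ a local diffeomorphism, so that the argument-principle count is a count of honest preimages — for the light cone itself the analogous map is two-to-one onto $\partial\D$, and it is the non-degeneracy of the Hessian of $z$ that \emph{unfolds} it into an embedding near the puncture.
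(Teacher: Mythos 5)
Your skeleton is the same as the paper's (Hessian condition $\Rightarrow$ $\mu$ is a local diffeomorphism; Bartnik $\Rightarrow$ properness; a degree count $\Rightarrow$ one sheet), and the properness step is acceptable since, like the paper, it ultimately rests on Bartnik's theorem — indeed your detour through ``uniform ellipticity would make the singularity removable, hence $\inf\Sigma=0$'' is superfluous, because asymptotic tangency to a null cone (which is what \cite{Bar} gives and what the paper cites) already yields $|\mu|\to 1$ directly. The genuine gap is in the injectivity count. Your argument needs two properties of the image curves $\gamma_r=\mu(\{x^2+y^2=r^2\})$: that they wind once about every fixed $q\in\D$ for small $r$ (fine — this is stable under $C^0$ closeness of $\gamma_r$ to the unit circle), and, crucially, that $\gamma_\delta$ is an \emph{embedded} Jordan curve, so that $\mathrm{wind}(\gamma_\delta,q)\in\{0,m\}$. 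Embeddedness of $\mu$ restricted to a circle is essentially the injectivity you are trying to prove; it does not follow from the $C^0$ conelike behaviour, nor even from a uniform estimate $\mu(x,y)=(x,y)/\sqrt{x^2+y^2}+o(1)$, since a curve $C^0$-close to the unit circle may self-intersect arbitrarily, and then its winding about interior points is unconstrained and your preimage formula gives no bound. You flag this yourself as ``the main obstacle'' to be ``extracted from Bartnik's finer study,'' but you never extract it, so the proof is incomplete exactly at its decisive point.

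The paper avoids this difficulty by counting along a different family of curves. Properness plus the local diffeomorphism property make $\mu:\mu^{-1}(A)\to A$ a covering map onto an annulus $A=\{R^2<p^2+q^2<1\}$, and the number of sheets is computed along the level curves $\gamma_\varepsilon=\{z=\varepsilon\}$ of $z$ itself: these are regular level sets (so embedded one-manifolds, and by the cone asymptotics a single Jordan curve of degree one for small $\varepsilon$), and along them $\mu=\nabla z$ is \emph{perpendicular} to the curve, so the degree of $\mu/|\mu|$ along $\gamma_\varepsilon$ is the turning number of the normal field of an embedded plane Jordan curve, namely one; hence the covering has one sheet. To repair your version you would either have to establish genuinely $C^1$-type radial asymptotics for $\nabla z$ along circles (not contained in the statement you cite), or switch from coordinate circles to level curves of $z$, which is precisely the paper's device.
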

\begin{proof}

Since the Gaussian curvature of the graph $z=z(x,y)$ does not vanish around the origin, it follows that $ \mu$ is a local diffeomorphism in a punctured neighborhood of the origin. On the other hand, as observed in the previous section, $z\in C^3(\Omega)$, and
since the singularity of $z$ is not removable, a theorem by Bartnik (cf. \cite{Bar})   states that  the graph of $z$  must be asymptotically tangent to the  upper null cone $\n^2_+:=\{(x,y,z)\in\L^3:\ z>0,\ x^2+y^2-z^2=0\}$ or to the lower null cone $\n^2_-:=\{(x,y,z)\in\L^3:\ z<0,\ x^2+y^2-z^2=0\}$ at the singularity. That is, $\mu$ is proper into the open unit disk.

Therefore, there exists an $R>0$ and an annulus $ A =\{(p,q)\in\r^2: R^2<p^2+q^2<1\}$ such that the map $\mu:\mu^{-1}(A)\fl A$ is a covering map.

  
Finally we are going to prove that the number of sheets of the covering $\mu$ is equal to $1$.  As mentioned before, the graph of $z$ is tangent to one of the null cones at the singularity, so assume, for instance, it is tangent to $\n^2_+$. Then, for all $\varepsilon >0$ small enough, the curve $\gamma_{\varepsilon}=\{(x,y,z(x,y)):\ z(x,y)=\varepsilon \}$ is embedded and its degree is equal to $1$. Moreover, the vertical projection of the upward unit normal $G(\gamma_{\varepsilon})=(\frac{\mu}{\sqrt{1-|\mu|^2}},\frac{1}{\sqrt{1-|\mu|^2}})(\gamma_{\varepsilon})$ to the graph is perpendicular to the planar curve $\gamma_{\varepsilon}$. Thus, the degree of $\mu/|\mu|$ is equal to 1 and an elementary topological argument gives us that the number of sheets must be 1.
\end{proof}
 
Let $ds^2$ denote  the induced metric of the graph $z=z(x,y)$ as a spacelike surface in $\L^3$. By uniformization, $(\Omega,ds^2)$ is conformally equivalent to either the punctured disk $\D^*$, or to some annulus $\A_r =\{\zeta\in \C: 1<|\zeta|<r\}.$

The next lemma is a removable singularity result. It relates the conformal structure of $(\Omega,ds^2)$ to the possibility that the solution $z(x,y)$ extends smoothly across the puncture.

 \begin{lema}\label{anillo}
 Let $z\in C^2(\Omega)$ be an elliptic solution to \eqref{maineq} defined on the punctured disk $\Omega\subset \R^2$, and assume that the Gaussian curvature of the spacelike graph $z=z(x,y)$ in $\L^3$ does not vanish around the singularity. If $(\Omega ,ds^2)$ is conformally equivalent to a punctured disk, then the function $z(x,y)$ extends $C^2$-smoothly across the puncture. That is, $z$ has a removable singularity at the puncture. 
 \end{lema}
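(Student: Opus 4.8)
The plan is to argue by contradiction: assume $(\Omega,ds^2)$ is conformally a punctured disk $\D^*$ but the singularity is non-removable, and derive a contradiction with Lemma \ref{univ}. First I would use the conformal equivalence to reparametrize the graph on $\D^*$; by the standard removable-singularity theory for punctured disks (the puncture is a single point in the conformal completion), the Gauss map $g:\D^*\to\D$ and the immersion data satisfy a quasilinear elliptic system near the puncture, and one would like to conclude that the conformal parameter extends across the origin. Concretely, since $(\Omega,ds^2)\cong\D^*$, by the uniformization statement \eqref{tristar} we may take $w=u+iv$ ranging over $\D^*$, and then \eqref{lapinm} holds for the immersion $\psi$ while \eqref{genharmonic} holds for $g$. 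The key point is that Bartnik's theorem guarantees $z$ extends continuously to the puncture with $z(0,0)=0$ and $|\nabla z|\to1$, i.e. $g$ has modulus tending to $1$; so $g$ maps a punctured neighborhood of $0$ into $\D$ with boundary values of modulus one.

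The main technical step is to promote this to genuine smoothness across the puncture. Here I would invoke the univalence and properness of $\mu=(z_x,z_y)$ from Lemma \ref{univ}: in the conformal picture $\mu$ corresponds (via $\mu=g/\overline{\cdots}$, or more directly via the relation between the Gauss map and the gradient of the graph, $G=(\mu/\sqrt{1-|\mu|^2},1/\sqrt{1-|\mu|^2})$) to the Gauss map $g$, which is then univalent and proper from a punctured neighborhood of $0$ onto a one-sided collar neighborhood of $\partial\D$. A univalent proper map from $\D^*$ that sweeps out a full collar of $\partial\D$ must, by the argument principle / Riemann mapping considerations, extend continuously to a homeomorphism of the closed punctured disk onto the closed collar, sending $0$ to a point of $\partial\D$ — but the image collar $A=\{R^2<p^2+q^2<1\}$ from the proof of Lemma \ref{univ} is an \emph{annulus}, not a disk with a boundary arc, so a degree-one univalent map of $\D^*$ onto it is impossible: $\D^*$ and a round annulus are not homeomorphic via a proper degree-one map that is a covering over the outer collar. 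This is the contradiction. Equivalently: Lemma \ref{univ} shows $\mu$ restricted to $\mu^{-1}(A)$ is a one-sheeted covering of the annulus $A$, hence $\mu^{-1}(A)$ is itself an annulus; but if the conformal type were $\D^*$, then $\mu^{-1}(A)$ would be conformally a punctured-disk neighborhood of $0$, which is an annulus of infinite modulus, while $A$ has finite modulus and $\mu$ would be conformal (being the Gauss map, which is conformal-like off the umbilics, or using the Beltrami systems \eqref{sistxy}), forcing equality of moduli — contradiction.

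To make the conformality precise I would use the Weierstrass-type data: from \eqref{rep} the immersion is reconstructed from $g$, and the induced metric is $ds^2=\Lambda|dw|^2$ with $\Lambda$ expressed through $g_w,g_{\bar w}$ and $H$; a short computation (which I will not grind through here) shows that away from points where the Hessian $z_{xx}z_{yy}-z_{xy}^2$ vanishes — which by hypothesis (3)/Lemma \ref{univ} is everywhere near the puncture — the Gauss map $g$ is a local diffeomorphism with nonzero Jacobian, and in fact the map $w\mapsto g(w)$ differs from a conformal map of annuli by a controlled quasiconformal factor governed by the Beltrami coefficient appearing in \eqref{sistxy}. The hard part will be this last modulus comparison: one must show that the (finite) modulus of the target annulus $A$ is incompatible with the infinite modulus of a punctured-disk neighborhood of $0$, using that $\mu=g$ is a one-to-one map between them with bounded dilatation. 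Once that incompatibility is established, the assumed conformal type $\D^*$ is untenable unless the singularity is removable; and if it is removable, the $C^2$-smoothness of the extension of $z$ follows from elliptic regularity applied to \eqref{maineq} (as in \cite{Ni}), since a removable singularity means $z$ extends as a classical elliptic solution across the origin.
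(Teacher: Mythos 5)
Your argument has a genuine gap at its decisive step. The contradiction you aim for is conformal, not topological: a punctured--disk neighborhood of the singularity \emph{is} homeomorphic (even diffeomorphic) to the round annulus $A=\{R^2<p^2+q^2<1\}$, so the first formulation (``a degree-one univalent map of $\D^*$ onto an annulus is impossible'') is simply false as stated. The second formulation (infinite modulus of $\mu^{-1}(A)$ versus finite modulus of $A$) is the right kind of statement, but it requires that $w\mapsto \mu(w)$ (equivalently the Gauss map $g$) be quasiconformal with dilatation bounded away from $1$ near the puncture; a mere diffeomorphism between the two annuli gives no contradiction whatsoever. This is exactly the step you defer with ``a short computation which I will not grind through here'', and it is not available under the hypotheses: from \eqref{kkk} the complex dilatation of $g$ satisfies $|g_{\overline w}/g_w|^2=H^2/(H^2+K)$, so a uniform bound $|g_{\overline w}/g_w|\le k<1$ amounts to a uniform positive lower bound on $K$ near the puncture. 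The hypothesis of the lemma only gives $K\neq 0$ (and $K$ could a priori be negative, or tend to $0$ at the puncture), and the blow-up $K\to+\infty$ is established in the paper only \emph{after} one knows the conformal type is an annulus and has boundary regularity there, so it cannot be invoked inside this contradiction argument. Also, the Beltrami system \eqref{sistxy} controls the map $(u,v)\mapsto(x,y)$, not the gradient map $(u,v)\mapsto(z_x,z_y)$, so it does not supply the needed dilatation bound either.

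For comparison, the paper avoids any quasiconformality claim. It shows that $\hat\mu=(z_x,z_y)$, viewed in the conformal parameter on $\D^*$, has finite Dirichlet energy: writing $|\nabla\hat\mu|^2$ via \eqref{maineq} and \eqref{sistxy}, the mean-curvature term is handled by Stokes' theorem applied to the field $\cH(1-z_x^2-z_y^2)(z_x,z_y)$ together with the equation itself, and the Hessian-determinant term is converted, by the univalence of $\mu$ from Lemma \ref{univ}, into a finite integral over the annulus $A$ in the $(p,q)$-plane. The Courant--Lebesgue lemma then produces circles around the puncture whose $\hat\mu$-images have length tending to zero, hence collapse to a point, contradicting the fact that $\mu$ is a homeomorphism onto a collar of the unit circle (those image curves must wind around $A$). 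If you want to salvage your route, you would have to first prove, under the sole hypothesis $K\neq 0$, a uniform dilatation bound for $g$ near the puncture in the punctured-disk scenario; nothing in the proposal does this, and it is precisely the difficulty the paper's energy/Courant--Lebesgue argument is designed to circumvent.
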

 
 \begin{proof}
First, note that we can choose $\rho>0$ in the definition of $\Omega$ such that $z$ is also $C^2$ at its exterior boundary.  

Let us denote $p=z_x$, $q=z_y$, $r= z_{xx}$, $s=z_{xy}$, $t=z_{yy}$. 
 
We argue by contradiction. Assume that $z$ has a non-removable isolated singularity at the origin, and that  $(\Omega ,ds^2)$ is conformal to the punctured disk $\D^*$. Define the map $\hat{\mu}:(u,v)\mapsto (p,q)(u,v):\D^*\flecha \R^2$. We prove next that $\hat{\mu}\in  W^{1,2}(\D^*)$. 

By using   \eqref{maineq} and \eqref{sistxy} we have that
$$|\nabla \hat{\mu}|^2=\frac{x_uy_v-x_vy_u}{\sqrt{1-p^2-q^2}}\left(\cH(r+t) (1-p^2-q^2)^{3/2}+(2-p^2-q^2)(-rt+s^2)\right).$$
Let $\Omega ' \Subset\Omega$ be a domain and consider  $\varepsilon>0$ such that     $\Omega '\subset\mathcal{A}_{\varepsilon}:=\{(x,y):\ \varepsilon<x^2+y^2<\rho^2\}$, then
 we can estimate  
\vspace{0.2cm} 
 \beq\label{int}\begin{array}{l}
   \displaystyle{\int_{\Omega'}\frac{1}{\sqrt{1-p^2-q^2}}\left(\cH(r+t) (1-p^2-q^2)^{3/2}+(2-p^2-q^2)(-rt+s^2)\right)dxdy}\\
   \\
    \leq \displaystyle{\int_{\mathcal{A}_{\varepsilon}} \cH (r+t) (1-p^2-q^2) dxdy+\int_{\mathcal{A}_{\varepsilon}} \frac{2-p^2-q^2}{\sqrt{1-p^2-q^2}}(-rt +s^2) dxdy}.
 
 \end{array}\eeq
  By Stokes' theorem applied to the vector field $\cH(1-p^2-q^2)(p,q)$  we obtain
 $$ 
 \displaystyle{  \int_{\mathcal{A}_{\varepsilon}} \cH (r+t) (1-p^2-q^2)dxdy    } =$$
 $$=\displaystyle{\int_{\partial \mathcal{A}_{\varepsilon}} \cH(1-p^2-q^2)\langle (p,q),\nu\rangle
dl -\int_{\mathcal{A}_{\varepsilon}}\langle (p,q),\nabla (\cH (1-p^2-q^2))\rangle dxdy}$$
$$\leq\displaystyle{C_0-\int_{\mathcal{A}_{\varepsilon}}(p\cH_x+q\cH_y)(1-p^2-q^2)dxdy +2\int_{\mathcal{A}_{\varepsilon}}\cH(p^2r+2pqs+q^2t)dxdy}$$
 for a certain constant $C_0>0$, where $\nu$ is the   exterior unit normal to $\partial \mathcal{A}_{\varepsilon}$ and we have estimated $|\langle (p,q),\nu\rangle|\leq 1$.
 
On the other hand,  by \eqref{maineq},
$$\displaystyle{2\int_{\mathcal{A}_{\varepsilon}}\cH(p^2r+2pqs+q^2t)dxdy}=$$
$$=4\int_{\mathcal{A}_{\varepsilon}}\cH^2(1-p^2-q^2)^{3/2}dxdy-2\int_{\mathcal{A}_{\varepsilon}}\cH(r+t)(1-p^2-q^2).$$
With all this we deduce that 

$$ 
 \displaystyle{  \int_{\mathcal{A}_{\varepsilon}} \cH (r+t) (1-p^2-q^2)dxdy    } \leq$$
 $$\leq\displaystyle{\frac{C_0}{3}  -\frac{1}{3}\int_{\mathcal{A}_{\varepsilon}}(p\cH_x+q\cH_y)(1-p^2-q^2)dxdy  +\frac{4}{3}\int_{\mathcal{A}_{\varepsilon}}\cH^2(1-p^2-q^2)^{3/2}dxdy.}$$
 
   Thus, since $\cH$ is smooth in $\cO$, if we let $\varepsilon\to 0$ and recall that $p^2+q^2<1$ by ellipticity, we may conclude that the first integral in the last line of \eqref{int} is bounded from above by a constant independent of $\Omega'$.
  
  On the other hand,    by Lemma \ref{univ}, we can use a change of variables to get
$$
\left|\int_{\mu^{-1}(A)} \frac{2-p^2-q^2}{\sqrt{1-p^2-q^2}}(-rt +s^2) dxdy\right|=  \int_{ A } \frac{2-p^2-q^2}{\sqrt{1-p^2-q^2}}dpdq= \frac{(4-R^2)}{3}<+\infty,
$$
where $A$ is the annulus $\{(p,q)\in\r^2: R^2<p^2+q^2<1\}$ given by the proof of Lemma \ref{univ}. In particular, the second integral in the last line of \eqref{int} is bounded by an universal constant. 
  
This proves that $\displaystyle{\int_{\D^*}|\nabla\hat{\mu}|^2dudv}$ is bounded. Therefore, we can apply the Courant-Lebesgue oscillation theorem \cite[Lemma 3.1]{Cou} to deduce that there exists a sequence $\{\tau_n\}_{n=1}^{\infty}\subset\R$ with $\tau_n\downarrow 0$ for $n\rightarrow\infty$ such that 
  $$
   \underset{n\rightarrow \infty}\lim\int_{C(0,\tau_n)}| d\hat{\mu}(u,v)|=0,
   $$
where $C(0,\tau_n)$ denotes the circle of radius $\tau_n$ centered at the origin. 
 That is, the sequence of closed curves $\{\hat{\mu}(C(0,\tau_n))\}$ contained in the unit disk has a subsequence collapsing into a point $(p_0,q_0)$ of the closed unit disk when $\tau_n$ goes to zero. This contradicts Lemma \ref{univ} and completes the proof of Lemma \ref{anillo}.
 \end{proof}
 

\section{Non-removable isolated singularities}\label{sec:nonr}

In this section we will study the asymptotic behavior at an isolated singularity of elliptic solutions to \eqref{maineq}. More specifically, along this section, $z\in C^2(\Omega)$ will be an elliptic solution to \eqref{maineq} with $z(0,0)=0$. We will suppose that the conformal type of the Riemannian metric $(\Omega, ds^2)$ of the spacelike graph $z=z(x,y)$ in $\L^3$ is that of an annulus (i.e. $(\Omega, ds^2)$ is not conformally equivalent to a punctured disk). 
 
  Observe that, since  the conformal structure is  that of an annulus then $z(x,y)$ does not extend $C^2$-smoothly to the origin. Also, note that, by Lemma \ref{anillo}, the condition of being a conformal annulus is guaranteed when   $z$ satisfies that $z_{xx} z_{yy}-z_{xy}^2 $ does not vanish around the puncture, and   $z$ does not extend smoothly to the the origin.

Let $(u,v)$ denote conformal parameters for $(\Omega,ds^2)$; since $(\Omega,ds^2)$ is conformally an annulus, we may assume that $(u,v)$ vary in the conformal quotient strip $$\Gamma_R=\{w\in \C: 0<\Im w < R\} /(2\pi \Z)$$ for a certain $R>0$. In this way, we can parametrize the graph $\{(x,y,z(x,y)): (x,y)\in \Omega\}$ as 
 \begin{equation}\label{conpar}
\psi(u,v)=(x(u,v),y(u,v),z(u,v)):\Gamma_R\flecha \L^3.
 \end{equation}
Note that $\psi$ extends continuously to $\R/(2\pi \Z)$, with $\psi(u,0)=(0,0,0)$ for every $u$.   Thus,  we are identifying the isolated singularity at the origin with the   circle $\{(u,v) : v =0\}/(2\pi \Z) $ in the conformal parametrization. As explained in Section \ref{sec:pre}, this identification is defined by means of the diffeomorphism $\Phi$ in \eqref{tristar}. For convenience, the union of the annulus $\Gamma_R$ with its   boundary given by $\R/(2\pi\Z)$ will be denoted by $\Gamma_R\cup\R$.

\subsection{A boundary regularity lemma}\label{sub:bound}

Observe that since $\psi$ is a conformal immersion of prescribed mean curvature $\cH$, we get from \eqref{lapinm} that $\psi$ satisfies the elliptic quasilinear system
 \begin{equation}\label{lapi2}
 \Delta \psi = 2(\cH \circ \psi) \, \psi_u \times \psi_v.
 \end{equation}
We use \eqref{lapi2} to prove the following boundary regularity lemma:


\begin{lema}\label{extanalitica}
In the conditions above, assume that $\cH\in C^{k,\alpha}(\cO)$, $k\geq 1$ (resp. $\cH\in C^{\omega}(\cO)$). Then, the conformal immersion $\psi(u,v):\Gamma_R\flecha \L^3$ extends to $\Gamma_R\cup \R$ as a map of class $C^{k+2}$  (resp. as a $C^{\omega}$ map).
\end{lema}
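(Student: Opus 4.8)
The plan is to view \eqref{lapi2} as a second-order elliptic system with a quadratic gradient nonlinearity and to bootstrap regularity up to the boundary circle $\R/(2\pi\Z)$, using the known conelike behavior of the singularity to control the geometry near the boundary. The first step is to record the key fact, inherited from Bartnik's theorem and Lemma \ref{univ}, that the Gauss map $\mu=(z_x,z_y)$ is proper onto the punctured unit disk, hence $|\mu|\to 1$ as $v\to 0$; in conformal coordinates this means the hyperbolic Gauss map $g=\pi\circ G$ approaches the boundary circle $\partial\D$ as $v\to 0$. Equivalently, the induced metric $ds^2=\Lambda|dw|^2$ degenerates (or blows up) in a controlled way, but after rescaling we expect $\psi$ itself to stay bounded, with $\psi\to 0$ uniformly as $v\to 0$; this is exactly the content of the continuous extension $\psi(u,0)=(0,0,0)$ noted before the lemma. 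So the task is to upgrade this $C^0$ boundary extension to $C^{k+2}$ (resp. $C^\omega$).

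Second, I would set up the boundary value problem. On the strip $\Gamma_R$ the map $\psi$ solves $\Delta\psi=f(u,v,\psi,\nabla\psi)$ with $f=2(\cH\circ\psi)\,\psi_u\times\psi_v$ quadratic in $\nabla\psi$ and $C^{k,\alpha}$ (resp. $C^\omega$) in its arguments. The first hurdle is to show $\nabla\psi$ is bounded near $\{v=0\}$: I would do this by a blow-up/monotonicity argument adapted to the conformal setting, or more concretely by exploiting that $|\psi_u|^2=|\psi_v|^2=\Lambda/2$ and $\langle\psi_u,\psi_v\rangle=0$ together with the explicit Weierstrass-type relations \eqref{rep} between $\psi_w$, $g$ and $H$; since $g$ extends continuously to the boundary and $|g|\to1$, one reads off the boundary behavior of $\psi_w$. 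Once $\nabla\psi\in L^\infty$ locally up to the boundary, $f\in L^\infty$, so $\psi\in W^{2,p}$ up to the boundary for all $p<\infty$ by $L^p$ elliptic estimates with the (constant, hence trivially regular) zero Dirichlet data $\psi|_{v=0}=0$, hence $\psi\in C^{1,\beta}$ up to the boundary for every $\beta<1$. Then $f\in C^{0,\beta}$, so Schauder estimates give $\psi\in C^{2,\beta}$, whence $f\in C^{1,\beta}$ (using $\cH\in C^{k,\alpha}$, $k\ge1$), and iterating the Schauder bootstrap $k$ more times yields $\psi\in C^{k+2,\alpha}$ up to $\Gamma_R\cup\R$. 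In the analytic case $\cH\in C^\omega$, once $C^{2,\beta}$ regularity up to the boundary is in hand one invokes the Morrey–Friedman analyticity theorem for solutions of analytic elliptic systems with analytic (here, zero) boundary data to conclude $\psi$ is real analytic up to $\R/(2\pi\Z)$.

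The main obstacle, and the place where most care is needed, is the first gradient bound: the system \eqref{maineq}, equivalently \eqref{lapinm}–\eqref{lapi2}, is not uniformly elliptic, and uniform ellipticity is lost precisely at the singularity, so a priori $|\nabla\psi|$ could blow up as $v\to 0$. The resolution is that although the \emph{graph} equation \eqref{maineq} degenerates, the \emph{conformal} system \eqref{lapi2} is a genuine semilinear elliptic system for $\psi$ with smooth coefficients (the cross product and $\cH\circ\psi$ are smooth as long as $\psi$ is bounded, which it is), and the only subtlety is the quadratic growth of $f$ in $\nabla\psi$. For such systems a small-energy regularity theorem applies: after the Courant–Lebesgue type estimate one finds, on circles $C(\cdot,\tau_n)$ with $\tau_n\downarrow0$, that the Dirichlet energy of $\psi$ on thin boundary half-collars is small, and a standard reflection (odd across $\{v=0\}$, legitimate because the boundary value is the constant $0$) together with the $\varepsilon$-regularity lemma for systems with quadratic nonlinearity gives interior-type estimates right up to the boundary, yielding the desired local $C^{1,\beta}$ bound. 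From there the Schauder/analytic bootstrap described above is routine. I would present the energy bound and $\varepsilon$-regularity step in detail and treat the subsequent iteration as standard elliptic regularity.
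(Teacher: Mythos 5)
Your overall skeleton is the same as the paper's: establish $|\Delta\psi|\le c|\nabla\psi|^2$ from \eqref{lapi2} and conformality, get an initial $C^{1,\alpha}$ estimate up to $\{v=0\}$, then run a Schauder/potential-theoretic bootstrap (the paper uses \cite[Lemma 4.10]{GiTr}) and finish the analytic case with a classical analyticity-up-to-the-boundary theorem (the paper invokes M\"uller \cite{Mu2} rather than Morrey--Friedman; that difference is harmless). The gap is in how you obtain the initial boundary estimate. Your first suggestion --- that since $|g|\to 1$ one can ``read off'' the boundary behavior of $\psi_w$ from the Weierstrass formulas \eqref{rep} and conclude $\nabla\psi\in L^\infty$ --- is circular: the denominator $(1-|g|^2)^2$ in \eqref{rep} tends to $0$ precisely at $\{v=0\}$, and the facts that $g$ extends continuously to the boundary and that $g_{\overline w}$ vanishes there are \emph{consequences} of Lemma \ref{extanalitica} (they are derived afterwards, in Lemma \ref{bu} and Remark \ref{rem:K}), not available inputs. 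So this route does not produce the gradient bound.

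Your fallback --- odd reflection across $\{v=0\}$ plus ``the $\varepsilon$-regularity lemma for systems with quadratic nonlinearity'' --- invokes a lemma that is false in the generality you state it: for systems with critical quadratic gradient growth in two variables, small Dirichlet energy alone does not imply continuity, let alone $C^{1,\beta}$ (Frehse-type bounded, discontinuous solutions have locally small energy). What rescues the situation is either the additional hypothesis that the quadratic constant times the oscillation of the solution is small (Hildebrandt--Widman type results; this does hold here because $\psi\to 0$ uniformly at the boundary, but you never identify it as the needed hypothesis), or --- and this is the paper's route --- the conformality of the parametrization, which is exactly what Heinz's boundary regularity theorem \cite{He2} exploits: for conformally (isothermally) parametrized systems with $|\Delta\psi|\le c|\nabla\psi|^2$ and continuous boundary values it yields $\psi\in C^{1,\alpha}$ up to the boundary directly, with no prior gradient bound and no reflection. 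In short, the hard analytic content of the lemma is concentrated in precisely the step you treat as standard; as written, that step is unjustified (and your reflection has the further cosmetic defect that it replaces $\cH(\psi)$ by $\cH(-\psi)$ on the reflected side, so it can only serve for the low-regularity step, as you implicitly concede by reverting to one-sided estimates afterwards). Replacing that step by an appeal to \cite{He2}, or by a correctly stated small-oscillation regularity theorem, would repair the argument.
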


\begin{proof} 
It is well-known that since $\psi(u,v)$ is of class $C^2(\Gamma_R)$ and $\cH$ is of class $C^{k,\alpha}(\cO)$,   from the elliptic equation \eqref{lapinm} we have that $\psi(u,v)$ is in fact of class $C^{k+2}(\Gamma_R)$.

In order to check  the differentiability at the boundary we will follow a bootstrapping method.
Consider an arbitrary point of $\R$, which we will suppose without
loss of generality to be the origin. Also, consider for $0<\delta<R$
the domain $\D^+=\{(u,v): 0<u^2+v^2<\delta^2\}\cap \Gamma_R$.
 
The inequalities $$ (x_u -y_v)^2+(x_v+y_u)^2\geq 0,\quad (x_u -z_v)^2+(x_v+z_u)^2\geq 0,\quad (y_u -z_v)^2+(y_v+z_u)^2\geq 0, $$ lead respectively 
to $$\def\arraystretch{2} \begin{array}{l} x_uy_v-x_v y_u\leq \frac{1}{2}(|\nabla x|^2+|\nabla y|^2),\\
x_uz_v-x_v z_u\leq \frac{1}{2}(|\nabla x|^2+|\nabla z|^2),\\
y_uz_v-y_v z_u\leq \frac{1}{2}(|\nabla y|^2+|\nabla z|^2).\end{array}$$
From here, \eqref{lapi2}
  and the fact that $\cH$ is bounded
  yield \beq\label{ineqlaplac}|\Delta
\psi|\leq c |\nabla \psi|^2  \eeq for a certain constant
$c>0$.  Thus, we can apply   Heinz's Theorem in
\cite{He2} to deduce that $\psi\in
C^{1,\alpha}(\overline{\D^+_{\varepsilon}})$  for all
$\alpha\in(0,1)$, where $\D^+_{\varepsilon}=\{(u,v)\in\R^2:\ u^2+v^2<\varepsilon^2,\ v>0\}$
 for a certain $0<\varepsilon<\delta$.

  Hence the right-hand side of \eqref{lapi2} is of class $\C^{0,\alpha}(\overline{\D^+_{\varepsilon}})$, from where a
standard potential analysis argument (cf. \cite[Lemma 4.10]{GiTr})
ensures that $\psi\in C^{2,\alpha}(\overline{\D^+_{\varepsilon/2}})$. A recursive process proves then that $\psi$ is of class $C^{k+2,\alpha}$ at a neighborhood of the origin. As we can
do the same argument for all points of $\R$ and not just at the origin,
we conclude that $\psi\in C^{k+2} (\Gamma_R\cup \R)$. In particular, if $\cH\in C^{\8}$ we have that $\psi\in C^{\8} (\Gamma_R\cup \R)$.

To finish the proof, let us note that $\psi\in C^{\omega} (\Gamma_R\cup \R)$ if $\cH\in C^{\omega}(\cO)$.  This is a direct consequence of \eqref{lapi2} and \cite[Theorem 3]{Mu2} (specifically, our situation is covered by the case $m=0$ in Theorem 3 of \cite{Mu2}).
\end{proof}



\subsection{Study of the limit null curve}\label{sub:null}

Let us denote by $b(u)$ the $2\pi$-periodic curve $\psi_v(u,0)$. It follows from Lemma \ref{extanalitica} that $b(u)$ is $C^{k+1}-$smooth (resp.   analytic) if   $\cH\in C^{k,\alpha}(\cO)$ (resp. if  $\cH$ is   analytic). Besides, since $(u,v)$ are conformal parameters for the induced metric of the surface and $\psi(u,0)=(0,0,0)$, we have $\esiz \psi_v,\psi_v \esde =\esiz \psi_u,\psi_u\esde =0$ at points of the form $(u,0)$. That is, $b(u)$ takes its values in $\N_+^2\cup \{{\bf 0}\}\cup \N_-^2$.

 \begin{definicion}\label{nulcu} We will call $b(u):\R/(2\pi \Z)\flecha \N_+^2\cup \{{\bf 0}\}\cup \N_-^2$  the \emph{limit null curve} at the singularity of the graph $z=z(x,y)$ associated to the conformal parameters $(u,v)$.\end{definicion}
 
The next result analyzes the geometric properties of this limit null curve.
\begin{lema}\label{bu}
The \emph{limit null curve} $b(\R)$ is a closed, spacelike Jordan curve in $\N_+^2$ or in $\N_-^2$.
\end{lema}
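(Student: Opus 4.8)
The plan is to establish the three assertions of the statement separately: that $b$ takes values in a single null cone, that it is spacelike, and that it is embedded. Since we already know $b(\R)\subset\N_+^2\cup\{\mathbf 0\}\cup\N_-^2$, and $\N_+^2,\N_-^2$ are the two connected components of the light cone with its vertex removed, the first assertion reduces, by continuity of $b$, to showing $b(u)\neq\mathbf 0$ for every $u$; and this in turn will be a free consequence of the spacelike property $\langle b'(u),b'(u)\rangle>0$, because if $b(u_0)=\mathbf 0$ then from $b(u)=(u-u_0)b'(u_0)+O((u-u_0)^2)$ and $\langle b(u),b(u)\rangle\equiv 0$ we get, after dividing by $(u-u_0)^2$ and letting $u\to u_0$, that $\langle b'(u_0),b'(u_0)\rangle=0$. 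So the core of the proof is the spacelike property.

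For that, I would first record the asymptotic expansion of $\psi$ near $v=0$. By Lemma \ref{extanalitica}, $\psi$ extends to $\Gamma_R\cup\R$ of class $C^{k+2}$, $k\geq1$; since $\psi(u,0)\equiv\mathbf 0$ we have $\psi_u(u,0)\equiv\mathbf 0$ and $\psi_{uu}(u,0)\equiv\mathbf 0$, so evaluating \eqref{lapi2} at $v=0$ gives $\psi_{vv}(u,0)=\Delta\psi(u,0)=\mathbf 0$; hence $\psi(u,v)=v\,b(u)+O(v^{3})$ and $\psi_u(u,v)=v\,b'(u)+O(v^3)$. Conformality then yields $\Lambda(u,v):=\langle\psi_u,\psi_u\rangle=\langle\psi_v,\psi_v\rangle=v^{2}\langle b'(u),b'(u)\rangle+O(v^{3})$, and since $\Lambda>0$ on $\Gamma_R$ we obtain $\langle b'(u),b'(u)\rangle\geq 0$. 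To upgrade this to a strict inequality I would use a boundary point (Hopf-type) lemma. The key observation is that in conformal coordinates a computation gives $\Delta z=2(\cH\circ\psi)\,J$, where $J=x_uy_v-x_vy_u>0$ is the Jacobian of the orientation-preserving change of coordinates \eqref{tristar}; since $\cH>0$, $z$ is strictly subharmonic on $\Gamma_R$. By Bartnik's theorem the graph is asymptotic to one of the null cones. In the case it is asymptotic to $\N_-^2$ one has $z<0$ on $\Gamma_R$ and $z\equiv 0$ on $\{v=0\}$, so $z$ attains a strict maximum along the flat boundary arc $\{v=0\}$, and Hopf's lemma forces $z_v(u,0)<0$, i.e. $b_3(u)<0$ for all $u$; in particular $b(u)\neq\mathbf 0$, and since $b$ is null this gives $\langle b'(u),b'(u)\rangle>0$. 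I expect the main obstacle to be the complementary case in which the graph is asymptotic to $\N_+^2$: there $z>0$ on $\Gamma_R$ and $z$ has a boundary \emph{minimum}, so Hopf applied to $z$ gives nothing, and one needs instead a more delicate argument — e.g.\ an auxiliary subharmonic function built from $z$ and $\sqrt{x^2+y^2}$, or a reflection/extension of $\psi$ across $\{v=0\}$. Granting the spacelike property, $b$ is nowhere zero, hence $b(\R)$ lies in $\N_+^2$ or in $\N_-^2$ by connectedness, and it is a closed curve because $b$ is continuous and $2\pi$-periodic.

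Finally, for embeddedness I would reduce to a winding number count. Fixing the null cone, write $b(u)=s(u)(\cos\theta(u),\sin\theta(u),\pm 1)$ with $s(u)=|b_3(u)|>0$; a direct computation gives $\langle b'(u),b'(u)\rangle=s(u)^2\,\theta'(u)^2$, so the spacelike property is equivalent to $\theta'(u)\neq 0$ for all $u$, which makes $\theta$ a strictly monotone covering map of $\R/2\pi\Z$ onto itself of some degree $n\neq 0$. From the expansion above, the level curve $\gamma_\epsilon=\{(x,y):z(x,y)=\pm\epsilon\}$ is, for small $\epsilon>0$, the image of a graph $\{v=V_\epsilon(u)\}$ with $V_\epsilon(u)=\epsilon/|b_3(u)|+O(\epsilon^3)$, whence $\gamma_\epsilon=\{\epsilon(\cos\theta(u),\sin\theta(u))+O(\epsilon^3):u\in\R/2\pi\Z\}$ (using $b_1^2+b_2^2=b_3^2$); this is a $C^1$-small perturbation of the $n$-fold cover of the circle of radius $\epsilon$, so its winding number about the origin equals $n$. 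By Bartnik's theorem $\gamma_\epsilon$ has winding number $\pm1$ about the origin for $\epsilon$ small, forcing $|n|=1$; thus $\theta$ is a homeomorphism of $\R/2\pi\Z$, and from $b(u_1)=b(u_2)\Rightarrow\theta(u_1)=\theta(u_2)\Rightarrow u_1=u_2$ we conclude that $b$ is injective, i.e.\ $b(\R)$ is a Jordan curve. As indicated, the serious difficulty throughout is the strict positivity of $\langle b',b'\rangle$ in the unfavourable light-cone case.
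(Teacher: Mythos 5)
Your reduction of the statement to the strict inequality $\langle b'(u),b'(u)\rangle>0$ is reasonable, and the preliminary steps are sound: the expansion $\psi(u,v)=v\,b(u)+O(v^3)$ obtained from \eqref{lapi2} and $\psi(u,0)\equiv\mathbf{0}$, the soft consequence $\langle b',b'\rangle\geq 0$ from conformality, and (conditionally on spacelikeness) the winding-number argument for embeddedness, which is a legitimate alternative to the paper's use of the injectivity of $g(u,0)$ coming from Lemma \ref{univ}. But the core step, which you yourself single out as the heart of the matter, is never actually proved, and in one place the argument contains a non sequitur. In the case of asymptotics to $\N_-^2$, Hopf's lemma applied to the strictly subharmonic function $z$ does give $b_3(u)<0$, but the jump ``$b(u)\neq\mathbf{0}$ and $b$ null, hence $\langle b',b'\rangle>0$'' is false: a nowhere-vanishing curve in the null cone has $\langle b',b'\rangle\geq0$ with equality wherever $b'$ is proportional to $b$, and your own later identity $\langle b',b'\rangle=s^2\theta'^2$ shows this degeneracy occurs exactly at critical points of the angular function $\theta$, which positivity of $s=|b_3|$ does nothing to exclude. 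In the case of asymptotics to $\N_+^2$ you concede that Hopf gives nothing and only sketch possible remedies (an auxiliary subharmonic function, or reflection of $\psi$ across $\{v=0\}$) without carrying them out; note that reflection is not available for the quasilinear system \eqref{lapi2}. Consequently neither the nonvanishing of $b$ in the upper-cone case nor the spacelike property in either case is established.

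For comparison, the paper obtains both facts from boundary nodal theory (Schulz) rather than Hopf-type arguments, and in a way that treats the two cones symmetrically. Nonvanishing of $b$: using \eqref{sistxy} and Lemma \ref{univ} (which bounds $z_x^2+z_y^2$ from below near the singularity) one gets $|\Delta z|\leq C|\nabla z|$ in conformal coordinates; if $b(u_0)=\mathbf{0}$ then $z$, $z_u$, $z_v$ all vanish at $(u_0,0)$ while $z\equiv0$ on the axis, so two nodal curves of $z$ would cross at $(u_0,0)$, and the interior one contradicts Bartnik's asymptotics. Spacelikeness of $b$: introduce $F=|g|^2-1$, which vanishes identically on the axis and is strictly negative inside (this is exactly the spacelike condition on $\psi$); the Gauss-map equation \eqref{genharmonic} yields $|F_{w\overline{w}}|\leq C(|F_{\overline{w}}|+|F|)$, and the identity $\langle b'(u),b'(u)\rangle=4|F_w z_w|^2(u,0)$ with $z_w(u,0)\neq0$ shows that $\langle b'(u_0),b'(u_0)\rangle=0$ would force $F_w(u_0,0)=0$, hence again two crossing nodal curves of $F$ at $(u_0,0)$, contradicting $F<0$ in $\Gamma_R$. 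Some device of this unique-continuation type (or an equivalent control of the boundary regularity of the Gauss map) is what your proposal is missing; without it the spacelike property, and with it the whole lemma, remains unproved.
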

\begin{proof}
  Using \eqref{sistxy} we obtain
 $$z_u^2+z_v^2=p^2(x_u^2+x_v^2)+q^2(y_u^2+y_v^2)+2pq(x_uy_u+x_vy_v)=\frac{p^2+q^2}{\sqrt{1-p^2-q^2}}(x_uy_v-x_vy_u),$$ where we are denoting $p=z_x$, $q=z_y$. Hence, from \eqref{lapi2} we deduce that
 \beq\label{lapz}
 \Delta z =\frac{2H\sqrt{1-p^2-q^2}}{p^2+q^2}(z_u^2+z_v^2).\eeq
 Observe that from Lemma \ref{univ} there exists $\varepsilon>0$ such that $p^2+q^2\geq\varepsilon>0$ close to the origin. Moreover, because of Lemma \ref{extanalitica} we obtain from \eqref{lapz} that
 \beq\label{lapz2} |\Delta z|\leq C|\nabla z|,\eeq for a certain constant $C>0$.
 We can now prove that $b(u) \neq \bf{0}$ $\forall u\in\R$. Arguing by contradiction, assume there exists a point $u_0\in\R$ such that $b(u_0)=\bf{0}$. Then, $z_v(u_0,0)=0$. Also, note that $z(u_0,0)=0=z_u(u_0,0)$. 
 
 From here, a standard nodal result (cf. Corollary $2$ in \cite{Sch}) applied to \eqref{lapz2} leads to the existence of two crossing nodal curves for $z(u,v)$ at $(u_0,0)$. One of these curves could be the real axis, which corresponds to the isolated singularity in this conformal parametrization. The existence of a second nodal curve gives a contradiction with the fact that the graph $z=z(x,y)$ converges asymptotically to either  $\N_+^2\cup \{{\bf 0}\}$ or to $\N_-^2\cup \{{\bf 0}\}$ at the origin, by Bartnik's theorem (cf. \cite{Bar}). This proves that $b(u)\neq \bf{0}$ for every $u\in \R$.
   
Next we will prove that $b(u)$ is a regular spacelike curve. Denoting $w=u+iv$,   let us define 
\beq \label{omegag2}
\vartheta(w,\overline{w}):=\frac{g_{\overline{w}}}{(1-|g|^2)^2},\qquad F(w,\overline{w}):=|g|^2-1.
\eeq
From \eqref{rep} we have
   \beq\label{omegag}\overline{\vartheta}=\frac{H}{2}(x_w+iy_w),\qquad g=\frac{x_w-iy_w }{z_w}.\eeq 
  Hence,  as $b(u)\in\N^+\cup\N^-$ and so $2z_w (u,0)=-iz_v (u,0)\neq 0$ for all $u\in \R$, we have that $\vartheta$, $g$ and $F$ are well defined in a neighborhood of the real axis.

 On the other hand, a simple computation using \eqref{genharmonic} leads to 
 $$
 F_{w\overline{w}}=\frac{|F_{\overline{w}}|^2}{|g|^2}+F^2\left(\vartheta\overline{g}\left( \frac{H_w}{H}-\frac{g_w}{g}\right)+\overline{\vartheta}g\left(\frac{H_{\overline{w}}}{H}-\frac{\overline{g_w}}{\overline{g}}\right)\right)-2F(\overline{g}^2g_w\vartheta +g^2\overline{g_w}\overline{\vartheta}),
 $$
 which by  Lemma \ref{extanalitica},   implies that
 \beq\label{lapF2}  |F_{w\overline{w}}|\leq C( |F_{\overline{w}}|+|F|),
 \eeq
 for a certain constant $C>0$.
 Moreover, from \eqref{lapi2} and \eqref{omegag} we have the relation 
\beq\label{gz} 4|F_w z_w|^2  (u,0) =4|g_w z_w|^2  (u,0) =\langle b'(u), b'(u)\rangle,\quad \forall u\in\R.\eeq

 Thus, if there existed some $u_0\in\R$ such that $\langle b'(u_0), b'(u_0)\rangle= 0$,  since $z_w (u_0,0)\neq 0$ we  deduce from   \eqref{gz} that  $F_w(u_0)=0$. 
 
 In that case, as   $F(u,0)=0$ for every $u\in \R$,  nodal theory (cf. Corollary $2$ in \cite{Sch}) applied to   \eqref{lapF2} would guarantee the existence of two crossing nodal curves of $F$ at $u_0$. This would contradict the fact that the singularity at the origin is isolated, and more specifically, the fact that $\psi(u,v)$ is a spacelike immersion in $\Gamma_R$ .

Finally, we are going to prove that the limit null curve $b(u)$ is embedded.
 
 We deduce from Lemma \ref{univ} that $g(u,0):\R/(2\pi \Z)\flecha \S^1\subset \C$ is injective, where $\S^1$ denotes the complex numbers of modulus one. Denoting $b=(b_1,b_2,b_3)$, the second formula in \eqref{omegag} shows that 
 \begin{equation}\label{sefor}
g(u,0)=\frac{b_1(u)-ib_2(u)}{b_3(u)},\end{equation} from where we deduce that $b(u)=b_3(u)(\overline{g},1)$ is a closed embedded curve since $b_3(u)\neq 0$.
 This concludes the proof of Lemma \ref{bu}.
 \end{proof}
 \begin{nota}\label{rem:K}
In the situation above, since $\vartheta$ is well defined along the real line, we have from    (\ref{omegag2})  that $g_{\overline{w}}$ vanishes identically at $\R$. In addition, since $|g_w(u,0)|\neq0$ from (\ref{gz}), we obtain that the Gaussian curvature of the graph (cf. \cite{AkNi})
 \beq\label{kkk}
 K=H^2\left(\frac{|g_w|^2}{|g_{\overline{w}}|^2}-1\right)
 \eeq
goes to $+\infty$ at the puncture.
 \end{nota}
 \subsection{The canonical conformal parametrization}
 Let us point out that the conformal parametrization \eqref{conpar} for the graph $z=z(x,y)$ is not unique. Indeed, assume that $\zeta=\zeta(w)$ is a $2\pi$-periodic biholomorphism $\zeta(w): \cV\subset \Gamma_R\flecha \Gamma_{R'}$ between a region $\cV\subset \Gamma_R$ such that exists $\Gamma_{R''}\subset \cV$, for some $R''>0$ small enough, and some other $\Gamma_{R'}$, $R'>0$. Then, $\psi^*:= \psi\circ \zeta^{-1}:\Gamma_{R'}\flecha \L^3$ is also a conformal parametrization of the graph $z=z(x,y)$ around its isolated singularity at the origin. 
 
 In addition, it can be easily checked that the limit null curve $b(u)$ at the singularity (see Definition \ref{nulcu}) associated to some conformal parameters $(u,v)$ really depends on this choice of a conformal parametrization for the graph $z=z(x,y)$.
 
Nonetheless,  in the case that $z(x,y)$ is real analytic, we can use Lemma \ref{bu} to fix a specific conformal parametrization for its graph. Specifically, in the proof of Lemma \ref{bu} we showed that for any conformal parametrization \eqref{conpar}, the Gauss map at the singularity in these conformal parameters, given by $g(u,0)$, is a real analytic bijective map $$g(u,0):\R/(2\pi \Z) \flecha \S^1\subset \C$$ with $|g'(u,0)|\neq 0$ for every $u$. Let $s=s(u)$ be the reparametrization of $g(u,0)$ such that $g(u,0)=\tilde{g}(s(u),0)$ where $\tilde{g}(s,0)=\cos s + i \sin s$, and let $\zeta(w)$ denote the holomorphic extension of $s(u)$, i.e. the unique holomorphic function with $\zeta(u,0)=s(u)$ for every $u\in \R$. We note that $\zeta(u)$ exists by real analyticity of $g(u,0)$. Moreover, 
 by the $2\pi$-periodicity of $g(u,0)$ it is clear that $\zeta(w)$ defines a biholomorphism between some $\cV\subset \Gamma_R$,  with $\Gamma_{R''}\subset \cV$ for $R''>0$ small enough, and some other $\Gamma_{R'}$, $R'>0$. In other words, we can choose the conformal parameters $(u,v)$ in \eqref{conpar} in such a way that $g(u,0)=\cos u + i \sin u$. Besides, it easily follows from \eqref{sefor} that in these conditions the limit null curve $b(u)$ of the graph $z=z(x,y)$ with respect to this specific conformal parametrization is given by $$b(u)= A(u)(\cos u, -\sin u,1):\R/(2\pi\Z)\flecha \N_+^2 \cup \N_-^2,$$ for some real analytic, nowhere vanishing $2\pi$-periodic function $A(u)$ (the fact that $A(u)\neq 0$ comes directly from the fact proven above that $b(u)\neq {\bf 0}$ at every $u\in \R$).
 
 As a consequence we obtain the next lemma.
 
 \begin{lema}\label{cano}
 Assume $\cH\in C^{\omega}(\cO)$, $\cH>0$, and let $z\in C^{2}(\Omega)$ be an elliptic solution to \eqref{maineq} whose associated conformal structure is that of an annulus. Then, $z\in C^{\omega}(\Omega)$ and there exists a unique $2\pi$-periodic conformal parametrization $$\psi(u,v):\Gamma_R\flecha \L^3$$ of the graph $z=z(x,y)$ around the origin such that $\psi$ extends analytically to $\Gamma_R\cup \R$ with $$\psi_v(u,0)= A(u) \, (\cos u, -\sin u, 1)$$ for some nowhere vanishing $2\pi$-periodic real analytic function $A(u)$.
 \end{lema}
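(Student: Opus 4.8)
The plan is to show that Lemma \ref{cano} is essentially a bookkeeping consequence of the work already done in Lemmas \ref{univ}, \ref{anillo}, \ref{extanalitica} and \ref{bu}, together with the reparametrization discussion immediately preceding the statement. First I would invoke the hypotheses: $z$ is an elliptic solution whose associated conformal structure is that of an annulus, so by the discussion opening Section \ref{sec:nonr} the metric $(\Omega, ds^2)$ is conformally equivalent to some $\Gamma_R$, and there exists a $2\pi$-periodic conformal parametrization $\psi(u,v):\Gamma_R\flecha \L^3$ of the graph, extending continuously to $\Gamma_R\cup\R$ with $\psi(u,0)\equiv(0,0,0)$. Since $\cH\in C^\omega(\cO)$, Lemma \ref{extanalitica} gives that $\psi$ extends to $\Gamma_R\cup\R$ as a $C^\omega$ map; composing this analytic parametrization with the (analytic, positive-Jacobian) diffeomorphism $\Phi^{-1}$ from \eqref{tristar} yields $z\in C^\omega(\Omega)$, establishing the first assertion.

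Next I would establish existence of the claimed parametrization. By Lemma \ref{bu} and Remark \ref{rem:K}, for any conformal parametrization the Gauss map restricted to the singular circle, $g(u,0):\R/(2\pi\Z)\flecha\S^1\subset\C$, is real analytic, bijective, with $|g'(u,0)|\neq0$ everywhere. As carried out in the paragraph preceding the lemma, let $s=s(u)$ be the reparametrization with $g(u,0)=\cos(s(u))+i\sin(s(u))$; this $s(u)$ is an analytic diffeomorphism of $\R/(2\pi\Z)$ (up to an additive $2\pi$-shift it is a degree $\pm1$ analytic circle map with nonvanishing derivative), and it admits a holomorphic extension $\zeta(w)$ to a neighborhood of $\R$ in $\Gamma_R$, which by $2\pi$-periodicity defines a biholomorphism between some $\cV\subset\Gamma_R$ containing $\Gamma_{R''}$ (for small $R''>0$) and some $\Gamma_{R'}$. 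Then $\psi^\ast:=\psi\circ\zeta^{-1}:\Gamma_{R'}\flecha\L^3$ is a conformal parametrization of the same graph, extends analytically to $\Gamma_{R'}\cup\R$, and has Gauss map $\tilde g(u,0)=\cos u+i\sin u$ on the singular circle. Plugging $g(u,0)=\cos u+i\sin u$ into formula \eqref{sefor}, which reads $g(u,0)=(b_1(u)-ib_2(u))/b_3(u)$, forces $b(u)=b_3(u)(\cos u,-\sin u,1)$; setting $A(u):=b_3(u)$, which is $2\pi$-periodic, real analytic (by Lemma \ref{extanalitica}) and nowhere vanishing (by the part of Lemma \ref{bu} proving $b(u)\neq\mathbf 0$, since $b(u)\in\N^2_+\cup\N^2_-$ means $b_3(u)\neq0$), we get $\psi^\ast_v(u,0)=A(u)(\cos u,-\sin u,1)$ as desired.

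For uniqueness, suppose $\psi^{(1)}$ and $\psi^{(2)}$ are two such parametrizations, defined respectively on $\Gamma_{R_1}$ and $\Gamma_{R_2}$. Since both parametrize the same graph near the puncture, $\psi^{(2)}=\psi^{(1)}\circ\eta$ for some $2\pi$-periodic biholomorphism $\eta$ between subregions as in the reparametrization discussion, with $\eta$ extending continuously (hence, by the analytic boundary extension, analytically) to the singular circle and mapping $\{v=0\}$ to $\{v=0\}$ (as this circle corresponds to the singularity in both pictures). Restricting to $\{v=0\}$, the normalization $g(u,0)=\cos u+i\sin u$ for both forces $\tilde g\circ\eta|_{v=0}=\tilde g|_{v=0}$, i.e. $\eta(u,0)=u+c$ for a constant $c$ — and in fact, by comparing orientations and the requirement that both are $2\pi$-periodic biholomorphisms $\Gamma_{R_i}\flecha\Gamma_{R_j}$, $\eta$ must be a translation $w\mapsto w+c$ with $c$ real; but then the value $\psi(u,0)\equiv 0$ and $\psi_v(u,0)=A(u)(\cos u,-\sin u,1)$ being preserved pins down $c=0$, so $\psi^{(1)}=\psi^{(2)}$.

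The main obstacle I expect is the uniqueness argument: one must carefully rule out a nontrivial boundary translation, which requires knowing that a $2\pi$-periodic biholomorphism between two quotient strips that preserves the boundary circle $\{v=0\}$ is forced to be $w\mapsto w+c$, and then that the normalization of $g(u,0)$ (together with analyticity of $g$ near the boundary, so that the interior biholomorphism is determined by its boundary values) kills the constant $c$. Everything else is an assembly of already-proven facts; the only subtlety in the existence half is confirming that $s(u)$ genuinely extends to a biholomorphism of a full substrip $\Gamma_{R''}$ and not merely a boundary collar — but this is exactly what the periodicity plus analyticity argument in the paragraph before the lemma provides.
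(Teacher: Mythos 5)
Correct, and essentially the paper's own argument: the paper proves Lemma \ref{cano} precisely by the paragraph preceding it (boundary analyticity from Lemma \ref{extanalitica}, the properties of $g(u,0)$ and of $b(u)$ from Lemma \ref{bu}, the holomorphic extension $\zeta$ of $s(u)$, and formula \eqref{sefor}), with uniqueness left implicit, which you fill in along the intended lines. One small streamlining of your uniqueness step: since $g(u,0)=\cos u+i\sin u$ for both parametrizations and $g(\cdot,0)$ is injective on the circle, the boundary restriction of the transition biholomorphism is already the identity modulo $2\pi\Z$, so the translation constant $c$ vanishes without appealing to the normalization of $\psi_v(u,0)$.
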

We need to observe here that the uniqueness stated by Lemma \ref{cano} takes into account the identification that we referred to in the introduction, that is, that two solutions to \eqref{maineq} that overlap on a punctured neighborhood of the isolated singularity are considered to be equal.
   
\begin{definicion}\label{ccp}
We call $\psi(u,v):\Gamma_R\flecha \L^3$ in Lemma \ref{cano} the \emph{canonical conformal parametrization} of the graph of the solution $z\in C^{\omega}(\Omega)$ to \eqref{maineq}.
\end{definicion}
   
 \section{Existence of conelike singularities with prescribed mean curvature}\label{sec:exis}
  
In this section we prove an existence theorem for solutions to \eqref{maineq} with a non-removable isolated singularity at the origin, by prescribing the limit null curve of the solution at the singularity with respect to its canonical conformal structure.

 \begin{teorema}\label{existence}
Let $\cH\in C^{\omega}(\cO)$, $\cH>0$, and let $A:\R\rightarrow\r\backslash\{0\}$ be a $2\pi$-periodic, real analytic  function. Then, there exists a real analytic solution $z\in C^{\omega}(\Omega)\cup C^0(\overline{\Omega})$ to \eqref{maineq} defined on some punctured disk $\Omega$ around the origin, such that:
 \begin{enumerate}
 \item 
The origin is a non-removable isolated singularity of $z$.
 \item
The limit null curve of $z=z(x,y)$ at the origin with respect to its canonical conformal structure is given by $b(u)=A(u)(\cos u,-\sin u,1)$, $u\in\R$.
\item
The Hessian determinant $z_{xx}z_{yy}-z_{xy}^2$ does not vanish around the origin.
  \end{enumerate}
  \end{teorema}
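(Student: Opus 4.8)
The plan is to reverse the analysis of Section~\ref{sec:nonr}: instead of extracting the periodic function $A(u)$ from a given singular solution, we start from $A(u)$ and reconstruct the solution by solving an appropriate boundary-value (in fact, Cauchy-type) problem for the quasilinear elliptic system \eqref{lapi2} on a half-strip $\Gamma_R$, with the degenerate boundary $\R/(2\pi\Z)$ playing the role of the singularity. Concretely, I would look for a conformal spacelike immersion $\psi(u,v):\Gamma_R\flecha \L^3$, real analytic up to $\R$, solving $\Delta\psi = 2(\cH\circ\psi)\,\psi_u\times\psi_v$ together with the boundary conditions $\psi(u,0)={\bf 0}$ and $\psi_v(u,0)=A(u)(\cos u,-\sin u,1)=:b(u)$. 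The vector $b(u)$ is null and $b'(u)=A'(u)(\cos u,-\sin u,1)+A(u)(-\sin u,-\cos u,0)$ is spacelike (its Lorentzian square is $A(u)^2>0$ since the $(\cos u,-\sin u,1)$ and $(-\sin u,-\cos u,0)$ directions contribute $0$ and $A(u)^2$ respectively), so this is legitimate prescribed Cauchy data for a conelike singularity. Because $\cH$ is real analytic and the prescribed data $b(u)$ is real analytic and $2\pi$-periodic, I expect to produce $\psi$ by a Cauchy--Kovalevskaya-type argument adapted to the elliptic system: one rewrites \eqref{lapi2} as a first-order analytic system in $(u,v)$ with noncharacteristic initial line $v=0$, and solves it locally in a neighborhood of $\R/(2\pi\Z)$, obtaining an analytic $\psi$ on some $\Gamma_R\cup\R$; periodicity in $u$ is inherited from the data. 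One must check the conformality relations $\langle\psi_u,\psi_u\rangle=\langle\psi_v,\psi_v\rangle$, $\langle\psi_u,\psi_v\rangle=0$ are preserved: they hold on $v=0$ by construction (both $\psi_u(u,0)=0$ and $\langle\psi_v,\psi_v\rangle(u,0)=0$), and a standard computation using \eqref{lapi2} shows the Hopf-type differential $\langle\psi_w,\psi_w\rangle$ is holomorphic, hence vanishes identically once it vanishes on the boundary; spacelikeness $\langle\psi_v,\psi_v\rangle>0$ on $\Gamma_R$ then follows from $b'(u)$ spacelike and continuity after possibly shrinking $R$.

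Once $\psi$ is constructed, I would verify that it is genuinely the conformal parametrization of a graph $z=z(x,y)$ over a punctured disk. Since $\psi(u,0)={\bf 0}$ collapses the boundary circle to the origin and $\psi$ is an immersion on $\Gamma_R$, the image surface has an isolated singular point at ${\bf 0}$; the spacelike condition guarantees that near each interior point the surface is a graph over the $(x,y)$-plane, and one glues these local graphs. To see the projection $(u,v)\mapsto(x(u,v),y(u,v))$ is, after restricting to a thin collar $\Gamma_{R'}$, a diffeomorphism onto a punctured disk $\Omega$, I would use that its boundary values trace out, via the Gauss map, the embedded circle $g(u,0)=\cos u+i\sin u$ on $\S^1$ — exactly the setup of Lemmas~\ref{univ} and~\ref{bu} read in reverse — so $\mu=(z_x,z_y)$ is univalent and proper near the puncture, and hence $\Phi^{-1}$ exists. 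The resulting $z(x,y)$ is then an elliptic solution to \eqref{maineq} (this is the content of Section~\ref{sec:pre}: conformal + prescribed mean curvature $\iff$ \eqref{maineq}), real analytic on $\Omega$ and continuous on $\overline\Omega$ with $z({\bf 0})=0$. Non-removability (condition (1)) follows because the induced metric is conformally an annulus $\Gamma_{R'}$, not a punctured disk — combine this with Lemma~\ref{anillo}; and condition (2) holds by construction of the boundary data. Condition (3), the nonvanishing Hessian, is equivalent to nonvanishing Gaussian curvature; by Remark~\ref{rem:K}, the curvature formula \eqref{kkk} gives $K\to+\infty$ at the puncture because $g_{\overline w}\equiv 0$ on $\R$ while $|g_w(u,0)|\neq 0$ (the latter from $\langle b'(u),b'(u)\rangle=A(u)^2\neq 0$ via \eqref{gz}), so $K>0$ on a collar after shrinking.

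The main obstacle, I expect, is not the formal Cauchy--Kovalevskaya step but controlling the \emph{size} of the domain and the validity of the geometric conclusions uniformly: Cauchy--Kovalevskaya yields analyticity only on an \emph{a priori} small, possibly $u$-dependent neighborhood of $\R$, and one needs a genuine strip $\Gamma_R$ with $R>0$ uniform in $u$ — here the $2\pi$-periodicity in $u$ plus a compactness argument over $\R/(2\pi\Z)$ should supply a uniform $R$. A subtler point is ensuring the immersion stays spacelike and the projection to the $(x,y)$-plane stays an immersion all the way down to (but not including) $v=0$; since $\psi_u(u,0)=0$ the parametrization degenerates at the boundary, so one must expand $\psi$ to second order near $v=0$ — writing $\psi(u,v)=v\,b(u)+\tfrac{v^2}{2}\psi_{vv}(u,0)+O(v^3)$ and using \eqref{lapi2} to compute $\psi_{vv}(u,0)=-\psi_{uu}(u,0)+2(\cH({\bf 0}))\,\psi_u\times\psi_v|_{v=0}=2\cH({\bf 0})\,b(u)\times\psi_u$ — wait, $\psi_u(u,0)=0$, so in fact $\psi_{vv}(u,0)=-\psi_{uu}(u,0)=-(v\,b(u))_{uu}|$ evaluated appropriately, i.e.\ one differentiates the relation $\psi_u(u,0)=0$ once more to get $\psi_{uv}(u,0)=b'(u)$ and $\psi_{uu}(u,0)=0$, whence $\psi_{vv}(u,0)=0$ as well, and one needs the \emph{third}-order term. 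This careful boundary asymptotic expansion, and extracting from it that $x_u x_v^\ast$-type Jacobian of the projection is nonzero for small $v>0$, is the delicate computation the proof will have to carry out; everything else is bookkeeping built on Sections~\ref{sec:pre}--\ref{sec:nonr}.
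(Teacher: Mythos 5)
Your overall route is the same as the paper's: solve the Cauchy problem \eqref{cauchy} for the analytic elliptic system \eqref{lapi2} with data $\psi(u,0)={\bf 0}$, $\psi_v(u,0)=b(u)=A(u)(\cos u,-\sin u,1)$ (Cauchy--Kovalevskaya plus uniqueness to get $2\pi$-periodicity and hence a genuine strip $\Gamma_R$), verify conformality through the holomorphicity of $\langle\psi_w,\psi_w\rangle$ and its vanishing on the real axis, recover the graph over a punctured disk via the covering/degree argument with $g(u,0)=\cos u+i\sin u$, and read off non-removability from the annular conformal type and condition (3) from Remark \ref{rem:K}. All of that matches Claims 1--3 of the paper's proof.

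The one genuine gap is exactly the step you flag at the end and then leave unresolved: showing that, despite the degeneracy of the induced metric at $v=0$ (where $\psi_u=0$ and $\langle\psi_v,\psi_v\rangle=0$), the map is a spacelike immersion and a vertical local graph on a collar $\{0<v<R'\}$. Your first attempt (``spacelikeness follows from $b'(u)$ spacelike and continuity'') cannot work, since the quantity you need to be positive vanishes identically on the boundary; and your second attempt misdiagnoses what is required: you correctly find $\psi_{vv}(u,0)=0$ but then conclude one must compute the third-order expansion of $\psi$. In fact no third-order information is needed. The paper settles this with a one-line computation on the normal vector: writing $(n_1,n_2,n_3)=\psi_u\times\psi_v$, one has $n_3(u,0)=0$ and, using $\psi_{uv}(u,0)=b'(u)$ and $\psi_v(u,0)=b(u)$,
\begin{equation*}
(\psi_u\times\psi_v)_v(u,0)=\psi_{uv}\times\psi_v(u,0)=b'(u)\times b(u)=A(u)\,b(u),
\end{equation*}
so $(n_3)_v(u,0)=A(u)^2>0$. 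By periodicity this gives $n_3=x_uy_v-x_vy_u>0$ on a uniform collar, i.e.\ $\psi$ is a local vertical graph there, hence an immersion, and spacelikeness then follows from the already established conformality relations (Claim \ref{inm}); this positivity of the Jacobian is also what feeds the covering/degree argument in Claim \ref{emb}. So your skeleton is right, but the ``delicate computation'' you postpone is both essential and much simpler than you anticipate, and without it the proposal does not yet establish that the constructed $\psi$ parametrizes a spacelike graph near the singularity.
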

  \begin{proof}
  Let $\psi(u,v)$ be the unique real analytic solution to the Cauchy problem 
  \begin{equation}\label{cauchy}
  \left\{ \def\arraystretch{1.2} \begin{array}{lll} \Delta \psi & = & 2 \cH(\psi) \psi_u \times \psi_v, \\ \psi(u,0) & = & 0, \\ \psi_v (u,0) & = & b(u),\end{array}\right.
  \end{equation}
where $\cH$ and $b(u)$ are given as in the statement of Theorem \ref{existence}. By uniqueness, such a solution $\psi$ is $2\pi$-periodic, and hence it can be defined in a horizontal quotient strip $\hat{\Gamma_R}:=\{(u,v): -R<v<R\}/(2\pi \Z)$, for $R>0$ small enough. We will keep the usual notation for $\Gamma_R$, i.e. $\Gamma_R =\widehat{\Gamma_R} \cap \{v>0\}$.
   
The proof of the theorem will follow by proving the following claims.
  \begin{claim}\label{claim1}
 The map $\psi(u,v):\widehat{\Gamma_R}\flecha \L^3$   satisfies the conformallity conditions 
  \begin{equation}\label{coco}
 \esiz \psi_u,\psi_u\esde = \esiz \psi_v,\psi_v\esde, \hspace{1cm} \esiz \psi_u,\psi_v\esde =0.
  \end{equation}
  Moreover, around points where $\esiz \psi_u,\psi_u\esde > 0$ the map $\psi$ defines a conformally immersed spacelike surface in $\L^3$ whose mean curvature $H$ is given by $H(u,v)=\cH(\psi(u,v))$.
  \end{claim}
  \begin{proof}[Proof of Claim \ref{claim1}]
 The conformal equations \eqref{coco} are equivalent to the complex equation $\esiz \psi_w,\psi_w\esde =0$, where $w=u+iv$. Note that by the initial conditions imposed in \eqref{cauchy}, we have $\esiz \psi_w,\psi_w\esde (u,0)=0$ for every $u$. Moreover, from the PDE in \eqref{cauchy} we see that $\esiz \psi_{w\overline{w}},\psi_w\esde =0$, i.e. $\langle \psi_w,\psi_w\rangle$ is holomorphic. Since this function vanishes along the real axis, we deduce that $\langle \psi_w,\psi_w\rangle(w)=0$ globally on $\widehat{\Gamma_R}$, as wished.

Finally, around points where $\esiz \psi_u,\psi_u\esde >0$ it is clear that $\psi$ is a spacelike conformal immersion, which therefore satisfies the system \eqref{lapinm} with respect to its mean curvature function $H=H(u,v)$. Comparing \eqref{cauchy} with \eqref{lapinm} we conclude that  $H(u,v)=\cH(\psi(u,v))$. This proves Claim \ref{claim1}.  
  \end{proof}
  
  \begin{claim}\label{inm}
Choosing a smaller $R>0$ if necessary, $\psi:\Gamma_R\flecha \L^3$ is a spacelike immersion. \end{claim}
  
 \begin{proof}[Proof of Claim \ref{inm}] 

By \eqref{cauchy}, and assuming that $b(u)$ is as in the statement of the theorem,  we have
 $$(\psi_u\times \psi_v)_v(u,0)=\psi_{uv}\times\psi_v(u,0)=b'(u)\times b(u)= A(u)b(u)\neq {\bf 0},\quad\forall u\in\R.$$  
So, denoting $(n_1,n_2,n_3):=\psi_u\times\psi_v$  we have $n_3(u,0)=0$  and $(n_3)_v(u,0)=A(u)^2>0$. Then, if we write $\psi=(x,y,z)$,  we deduce that  $x_uy_v-x_vy_u=n_3 >0$ in some $\Gamma_{R'}$ with $R'\in (0,R)$. This proves that $\psi:\Gamma_{R'}\flecha \L^3$ is a local graph in the vertical direction, and in particular an immersion. Finally, that the immersion $\psi:\Gamma_{R'}\flecha \L^3$ is spacelike is easily deduced from the conformal conditions \eqref{coco}. This proves Claim \ref{inm}.
 \end{proof}
   
  \begin{claim}\label{emb}
Choosing a smaller $R>0$ if necessary, $\psi(\Gamma_R)$ is a spacelike graph $z=z(x,y)$ in $\L^3$, defined on some punctured disk $\Omega\subset \R^2$ around the origin.  
  \end{claim}
 \begin{proof}[Proof of Claim \ref{emb}]
Since $\psi(u,v)$ is a local graph then the map $\Phi^{-1}:\Gamma_R\fl \r^2$ given by $\Phi^{-1}(u,v)=(x(u,v),y(u,v))$ is a local diffeomorphism. Moreover, since $\Phi^{-1}(u,0)=(0,0)$ and 
$$
\lim_{v\rightarrow 0}\frac{z(u_0,v)^2}{x(u_0,v)^2+y(u_0,v)^2}=\frac{z_v(u_0,0)^2}{x_v(u_0,0)^2+y_v(u_0,0)^2}=1,
$$
we obtain that the immersion $\psi(u,v)$ is tangent to the one of the null cones. A standard topological argument asserts that  $\Phi^{-1}$ is a covering map from an adequate open set $U\subset\Gamma_R$, with $\Gamma_{R'}\subset U$ for $R'$ small enough, onto a punctured neighborhood of the origin. So, in order to prove Claim \ref{emb} we need to show that the number of sheets of this covering is one.

From the expression of $b(u)$ and (\ref{omegag}) we obtain that $g(u,v)$ is real analytic up to the real axis and $g(u,0)=\cos u +i\sin u$. Hence, for every closed curve $\gamma\subset\Gamma_R\cup\r$ homotopic to the circle given by the real axis satisfies that the degree of $(g/|g|)(\gamma)$ is one.     

Since $\psi(u,v)$ is tangent to one of the null cones, we can assume, for instance, that it is tangent to $\n^2_+$. Thus, observe that the number of sheets of the covering map $\Phi^{-1}$ is equal to the degree of the planar curve $\gamma_\varepsilon$ given by $\psi(u,v)\cap\{z=\varepsilon\}$, for $\varepsilon>0$ small enough. Again, as explained in the proof of Lemma \ref{univ}, the degree of this planar curve can be computed as the degree of $(\mu/|\mu|)(\gamma_\varepsilon)=(g/|g|)(\gamma_{\varepsilon})$. Therefore, $\Phi^{-1}$ is a diffeomorphism from $U$ onto its image as we wanted to show.
\end{proof}

With all of this, we have obtained a function
$z\in C^{\omega}(\Omega)\cup C^0(\overline{\Omega})$ such that its graph $z=z(x,y)$ in $\L^3$ can be conformally parametrized by the map $\psi(u,v):\Gamma_R\flecha \L^3$ that solves \eqref{cauchy}. As proved in Claim \ref{claim1}, the graph has prescribed mean curvature given by $\cH$; thus, $z$ is an elliptic solution to \eqref{maineq}, with an isolated singularity at the origin, and $z(0,0)=0$ (since $\psi(u,0)=(0,0,0)$). This singularity is non-removable since the induced conformal structure is that of an annulus.  From Remark \ref{rem:K}    $z_{xx}z_{yy}-z_{xy}^2$ does not vanish around the origin since the Gaussian curvature of the graph blows up at the origin.

 Moreover, again by the initial conditions in \eqref{cauchy} and the specific expression of $b(u)$, we see that $\psi(u,v)$ is indeed the canonical conformal parametrization of the graph $z=z(x,y)$ (see Definition \ref{ccp}), and that $b(u)$ is its limit null curve for this parametrization. This finishes the proof of Theorem \ref{existence}.
  \end{proof}

 \section{Classification Theorem and examples}\label{sec:class}
 
 We next use our results of the previous sections to deduce the classification result for non-removable isolated singularities of elliptic solutions to \eqref{maineq} stated in Theorem \ref{main1}.  Theorem \ref{main2} below gives a more specific statement for such theorem.

 \begin{teorema}\label{main2}
  Let $\cO\subset \L^3$ be a neighborhood of some point $p_0=(x_0,y_0,z_0)$, and let $\cH\in C^{\omega}(\cO)$, $\cH>0$.
 
 Let $\cA_1$ denote the class of all elliptic solutions $z(x,y)$ to \eqref{maineq} that satisfy the following conditions:
  \begin{enumerate}
  \item
$z\in C^{2}(\Omega)$, where $\Omega\subset \R^2$ is some punctured disk (of any radius) centered at $(x_0,y_0)$.
 \item
$z\in C^0(\overline{\Omega})$, with $z(x_0,y_0)= z_0$.
 \item
The Hessian determinant $z_{xx} z_{yy} -z_{xy}^2$ does not vanish around $(x_0,y_0)$.
 \end{enumerate}
Here, we identify two elements of $\cA_1$ if they coincide on a neighborhood of $(x_0,y_0)$. 

Let $\cA_2$ denote the class of $2\pi$-periodic, real analytic, nowhere vanishing functions $A(u):\R\flecha \R\backslash \{0\}$.

Then, the map that sends each $z\in \cA_1$ to the height function $A(u)$ of its limit null curve at the singularity $p_0$ with respect to its canonical conformal parametrization defines a bijective correspondence between $\cA_1$ and $\cA_2$.
\end{teorema}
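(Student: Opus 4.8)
The plan is to verify the three statements concealed in the theorem: that $z\mapsto A$ is well defined with values in $\cA_2$, that it is surjective, and that it is injective. The first two are essentially repackagings of results from the previous sections, and all the real work sits in injectivity. After a translation we may assume $p_0=(0,0,0)$, so that (as throughout the paper) the solutions in $\cA_1$ have a non-removable isolated singularity at the origin with $z(0,0)=0$. For well-definedness: given $z\in\cA_1$, non-removability together with condition (3) and Lemma \ref{anillo} forces the conformal type of $(\Omega,ds^2)$ to be that of an annulus, so Lemma \ref{cano} applies, giving $z\in C^{\omega}(\Omega)$ and a canonical conformal parametrization $\psi(u,v):\Gamma_R\flecha\L^3$, unique up to the identification defining $\cA_1$, with $\psi_v(u,0)=A(u)(\cos u,-\sin u,1)$ for a $2\pi$-periodic, real analytic, nowhere vanishing $A$; thus $A\in\cA_2$, and it depends only on the class of $z$. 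For surjectivity: given $A\in\cA_2$, apply Theorem \ref{existence} to $\cH$ and $A$; the solution $z\in C^{\omega}(\Omega)\cup C^{0}(\overline{\Omega})$ it produces lies in $\cA_1$ (it is $C^2$, extends continuously with $z(0,0)=0$, and has non-vanishing Hessian determinant near the origin by Theorem \ref{existence}(3)), and by Theorem \ref{existence}(2) its limit null curve with respect to its canonical conformal parametrization is $A(u)(\cos u,-\sin u,1)$, so the image of $z$ under our map is $A$.

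For injectivity, suppose $z_1,z_2\in\cA_1$ give the same $A$; let $\psi_i:\Gamma_{R_i}\flecha\L^3$ be the canonical conformal parametrization of the graph of $z_i$, and put $R=\min(R_1,R_2)$. The idea is to recognize $\psi_1$ and $\psi_2$ as two solutions of one and the same analytic Cauchy problem and then invoke analytic uniqueness. Each $\psi_i$ is a conformal spacelike immersion of prescribed mean curvature $\cH$, hence solves \eqref{lapi2} on $\Gamma_R$; by Lemma \ref{extanalitica} it extends real analytically to $\Gamma_R\cup\R$, and since both sides of \eqref{lapi2} are then analytic and agree on the open set $\Gamma_R$, the identity \eqref{lapi2} persists on $\Gamma_R\cup\R$. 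Together with $\psi_i(u,0)=(0,0,0)$ and $(\psi_i)_v(u,0)=A(u)(\cos u,-\sin u,1)$ — both built into the normalization in Lemma \ref{cano} — this says exactly that $\psi_1$ and $\psi_2$ both solve the analytic Cauchy problem \eqref{cauchy} with the same data on $\Gamma_R\cup\R$.

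Next I would extend each $\psi_i$ analytically to a two-sided quotient strip $\{-R'<v<R'\}/(2\pi\Z)$, observe that the identity principle propagates \eqref{lapi2} to this two-sided strip, rewrite it as $\psi_{vv}=-\psi_{uu}+2(\cH\circ\psi)\,\psi_u\times\psi_v$ (which is in Cauchy-Kovalevskaya normal form, since the right-hand side is analytic because $\cH$ is, and the hypersurface $\{v=0\}$ is non-characteristic because $\Delta$ is elliptic), and apply Cauchy-Kovalevskaya uniqueness to conclude $\psi_1\equiv\psi_2$ on some $\Gamma_{R''}$ with $0<R''\le R$ (the width $R''$ can be taken uniform in $u$ by $2\pi$-periodicity). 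It then remains to transfer this back to the graphs. Writing $\psi_i=(x_i,y_i,z_i\circ\Phi_i^{-1})$ with $\Phi_i$ the conformal change of variables \eqref{tristar}, the equality $\psi_1\equiv\psi_2$ on $\Gamma_{R''}$ gives $\Phi_1^{-1}\equiv\Phi_2^{-1}$ there, so the image set $V:=\Phi_1^{-1}(\Gamma_{R''})=\Phi_2^{-1}(\Gamma_{R''})$ is well defined, $\Phi_1\equiv\Phi_2$ on $V$, and $z_1\equiv z_2$ on $V$. Since each $\Phi_i^{-1}$ is a diffeomorphism of the annulus $\Gamma_{R_i}$ onto the punctured disk $\Omega_i$ carrying the end $v\to0$ onto the puncture, $V$ contains a punctured neighborhood of the origin; hence $z_1$ and $z_2$ coincide near $p_0$, that is, they are the same element of $\cA_1$, and injectivity follows.

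The main obstacle is the injectivity step, and the genuinely delicate point inside it is making the Cauchy-Kovalevskaya argument legitimate: one must verify that the two canonical parametrizations solve the \emph{same} analytic Cauchy problem \emph{on the closed strip} (which is precisely what the boundary regularity of Lemma \ref{extanalitica} and the normalization of Lemma \ref{cano} provide), that the initial hypersurface is non-characteristic (immediate from ellipticity), and that the one-sidedness of the original domain is harmless (handled by the analytic two-sided extension and the identity principle); one must also be careful when transferring the conclusion from the conformal picture back to the solutions $z_i$, as above. It is worth noting that this is the one place where the full real analyticity of $\cH$ is essential for the uniqueness half of the theorem; if $\cH$ were only $C^{k,\alpha}$, one would instead invoke a unique-continuation theorem for the Cauchy problem of the elliptic system \eqref{lapi2}.
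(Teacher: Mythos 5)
Your proposal is correct and follows essentially the same route as the paper: well-definedness via Lemma \ref{anillo} plus Lemma \ref{cano}, surjectivity via Theorem \ref{existence}, and injectivity by viewing both canonical parametrizations as solutions of the Cauchy problem \eqref{cauchy} with the same analytic data and invoking uniqueness. You merely spell out details the paper leaves implicit (the Cauchy--Kovalevskaya normal form, the two-sided analytic extension, and the transfer from $\psi_1\equiv\psi_2$ back to $z_1\equiv z_2$ near the puncture), all of which are sound.
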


\begin{proof}

Without loss of generality we can assume $p_0=(0,0,0)$.

Consider the map $\Upsilon$ that sends each $z\in
\cA_1$ to the height function $A(u)$ of its limit null curve with respect to its canonical conformal parametrization. By Lemma \ref{cano}, $\Upsilon$ is a well defined map from $\cA_1$ into $\cA_2$. So, in order to prove
Theorem \ref{main2} it remains to check that $\Upsilon:\cA_1\flecha \cA_2$ is
bijective.

Surjectivity is a consequence of Theorem \ref{existence}, as
follows. Consider $A(u)\in \cA_2$, and construct the curve $$b(u)=A(u)(\cos u,-\sin u,1):\R/(2\pi \Z)\flecha \N^2_+\cup \N_-^2.$$ 
By Theorem \ref{existence}, there exists an elliptic solution $z\in C^{\omega}(\Omega)\cup C^0(\overline{\Omega})$ to \eqref{maineq}
with $z(0,0)=0$ satisfying (1), (2) and (3), that is, $z\in \cA_1$. Note that by construction, the map $\Upsilon: \cA_1\flecha \cA_2$ defined above takes this function $z\in \cA_1$ to the function $A(u)\in \cA_2$ we started with.

To finish we prove the injectivity of $\Upsilon$. Let $z_1,z_2\in
\cA_1$ and  let $\psi_1(u,v),\psi_2(u,v):\Gamma_R\flecha \L^3$ denote their respective canonical conformal parametrizations. If $\Upsilon(z_1)=\Upsilon(z_2)=A(u)\in \cA_2$ then both $\psi_1,\psi_2$ are solutions to the Cauchy problem \eqref{cauchy} with the same analytic initial conditions. By
uniqueness of the solution to the Cauchy problem  \eqref{cauchy},
we get $\psi_1 (u,v)= \psi_2(u,v)$. In particular, $z_1=z_2$ on a neighborhood of the origin. This proves injectivity and finishes the proof of
Theorem \ref{main2}.
\end{proof}

We remark that, geometrically, Theorem \ref{main2} can be restated as follows.
 \begin{teorema}\label{class}
 Let $ \mathcal{H}:\mathcal{O}\subset\L^3\flecha (0,\8)$ be a positive
real analytic function defined on an open  set $\mathcal{O}\subset
\L^3$ containing a given point $p_0\in\L^3$. Let $\cA$ denote the
class of all spacelike graphs $\Sigma$ in $\L^3$ with upwards-pointing unit normal
that have $p_0$ as a non-removable isolated singularity, whose
mean curvature at every point $a\in \Sigma\cap \mathcal{O}$ is
given by $\mathcal{H}(a)$, and whose Gaussian curvature does not vanish near $p_0$; here, we identify $\Sigma_1,\Sigma_2\in \cA$
if they overlap on an open set containing the singularity $p_0$.

Then, the map that sends each graph in $\cA$ to its limit null curve at the singularity with respect to its canonical conformal parametrization provides a one-to-one correspondence
between $\cA$ and the class $\cB$ of  regular, spacelike, negatively oriented  Jordan curves in the union of the positive and negative null cones $\N_+^2\cup \N_-^2$.
 \end{teorema}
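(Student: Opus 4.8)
The plan is to transfer the one-to-one correspondence $\Upsilon:\cA_1\to\cA_2$ established in Theorem~\ref{main2} to the geometric setting of Theorem~\ref{class}, by observing that each piece of data in Theorem~\ref{class} is an intrinsic, parametrization-free repackaging of the data appearing in Theorem~\ref{main2}. First I would reduce to the situation $p_0=(0,0,0)$, which is harmless by translation invariance of the notions involved. Then I would explain why a spacelike graph $\Sigma\in\cA$ with upwards unit normal can, in a punctured neighborhood of $p_0$, be written uniquely (up to the identification of solutions overlapping near $p_0$) as $z=z(x,y)$ for a solution of \eqref{maineq} in the class $\cA_1$: conditions (1)--(3) of $\cA_1$ are precisely $C^2$-regularity off the singularity, the continuity of the extension with value $z_0=0$ (automatic from ellipticity), and nonvanishing of the Gaussian curvature, which by \eqref{coef} and condition (3)'s equivalence discussed after Theorem~\ref{main1} is the same as nonvanishing of the Hessian determinant $z_{xx}z_{yy}-z_{xy}^2$. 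Conversely a solution in $\cA_1$ has as its graph an element of $\cA$. So $\cA$ and $\cA_1$ are the same set under the identification; this step is essentially bookkeeping.

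Next I would address the target side: I must show that the map sending a graph to its \emph{limit null curve} $b(\R)$ at $p_0$ with respect to the canonical conformal parametrization lands in $\cB$, and that this assignment factors through $\Upsilon$ in an invertible way. By Lemma~\ref{bu}, for \emph{any} conformal parametrization the curve $b(\R)$ is a closed, spacelike Jordan curve contained in $\N_+^2$ or $\N_-^2$; and by the construction of the canonical conformal parametrization (Definition~\ref{ccp}, Lemma~\ref{cano}) we have the explicit form $b(u)=A(u)(\cos u,-\sin u,1)$ with $A\in\cA_2$. The orientation assertion --- that $b$ is ``negatively oriented'' --- is exactly the statement that, as $u$ runs over $[0,2\pi)$ increasingly, the projection $g(u,0)=\cos u + i\sin u$ winds once \emph{positively} around $\S^1$ while the underlying planar curves $\gamma_\varepsilon$ have degree $+1$ with the graph lying above the cone; I would spell out that in the $(u,v)$ parametrization with $v\downarrow 0$ corresponding to approaching the singularity, the induced orientation on $b$ is the one opposite to the standard positive orientation of $\N^2_\pm$ regarded as a convex cone, i.e. the sign conventions already fixed in Lemma~\ref{univ} and Claim~\ref{emb}. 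Thus the geometric map is well-defined from $\cA$ into $\cB$.

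Then comes the crux: I must show the correspondence $\cA_2\leftrightarrow\cB$ is a bijection, i.e. that every regular, spacelike, negatively oriented Jordan curve $\Gamma\subset\N_+^2\cup\N_-^2$ arises as $b(\R)=A(u)(\cos u,-\sin u,1)$ for a \emph{unique} $A\in\cA_2$. The point is that the writing $\Gamma=\{A(u)(\cos u,-\sin u,1)\}$ amounts to choosing, for each point of $\Gamma$, its ``null direction'' $(\cos u,-\sin u,1)$; since $\Gamma$ is a Jordan curve on a null cone, the assignment of this null ray is a homeomorphism $\Gamma\to\S^1$, and ``spacelike'' plus ``negatively oriented'' forces this homeomorphism to be an orientation-compatible real-analytic diffeomorphism onto $\S^1$ (regularity and real-analyticity of $\Gamma$ here being part of the hypothesis on $\cB$ — I would state $\cB$ as consisting of real-analytic such curves, matching the real-analyticity in $\cA_2$). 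Parametrizing $\S^1$ by $u\mapsto\cos u+i\sin u$ and pulling back gives a unique real-analytic $2\pi$-periodic $u\mapsto$ (point of $\Gamma$), whose third coordinate is the desired nowhere-vanishing $A(u)$; nowhere vanishing because $\Gamma$ avoids the vertex of the cone. This identifies $\cB$ with $\cA_2$ canonically. I expect this identification — in particular pinning down exactly how ``spacelike + negatively oriented'' translates into regularity and the correct winding of the null-direction map — to be the main obstacle, and it is where one must be careful that the real-analyticity and regularity hypotheses packaged into $\cB$ are exactly those matching $\cA_2$, since otherwise a curve in $\cB$ might fail to be hit.

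Finally, I would assemble the diagram: $\cA\xrightarrow{\ \sim\ }\cA_1\xrightarrow{\ \Upsilon\ }\cA_2\xleftarrow{\ \sim\ }\cB$, with the left arrow the graph identification, $\Upsilon$ the bijection of Theorem~\ref{main2}, and the right arrow the bijection $A\mapsto\{A(u)(\cos u,-\sin u,1)\}$ just described. The composite is, by construction, precisely the map ``graph $\mapsto$ limit null curve at $p_0$ in the canonical conformal parametrization,'' since Lemma~\ref{cano} produces $b(u)=A(u)(\cos u,-\sin u,1)$ from $\Upsilon(z)=A$. Being a composition of bijections, it is a bijection, which is the assertion of Theorem~\ref{class}. \qed
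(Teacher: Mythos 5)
Your proposal is correct and follows essentially the same route as the paper: the paper offers no separate proof of Theorem~\ref{class}, presenting it simply as a geometric restatement of Theorem~\ref{main2}, and your argument is exactly the dictionary $\cA\leftrightarrow\cA_1$, $\Upsilon:\cA_1\to\cA_2$, $\cA_2\leftrightarrow\cB$ that this restatement implicitly relies on. Your observation that the curves in $\cB$ must be taken real analytic (and that ``spacelike $+$ Jordan'' forces the null-direction map to have degree $\pm1$, with the orientation fixing the sign) is a correct and worthwhile filling-in of details the paper leaves unstated.
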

   
We say that $\cH\in C^{\omega}(\cO)$ is \emph{rotationally symmetric} around the point $p_0$ if $\cH\circ I_{\theta} = \cH$ for all rotations $I_{\theta}:\L^3\flecha \L^3$, $\theta\in [0,2\pi)$, with axis $L$ equal to the vertical line of $\L^3$ passing through $p_0$. In this situation, we have:

 \begin{corolario}\label{radial}
Assume that  $\cH\in C^{\omega}(\cO)$ is \emph{rotationally symmetric} around the point $p_0$, and let $\Upsilon:\cA_1\flecha \cA_2$ be the bijective correspondence given by Theorem \ref{main2}. Then, the subclass of radial graphs in $\cA_1$ is mapped via $\Upsilon$ to the subclass of constant, non-zero functions in $\cA_2$.
\end{corolario}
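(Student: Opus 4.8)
The plan is to use the bijective correspondence $\Upsilon:\cA_1\flecha\cA_2$ from Theorem \ref{main2} and exploit the rotational symmetry of $\cH$ to match up the rotation action on $\cA_1$ with a translation action on $\cA_2$. First I would observe that if $z(x,y)\in\cA_1$ and $I_\theta$ is a rotation with vertical axis through $p_0=(0,0,0)$, then since $I_\theta$ is an isometry of $\L^3$ preserving the vertical direction, the graph $I_\theta(\Sigma_z)$ is again a spacelike graph of some function $z^\theta(x,y)$, and because $\cH\circ I_\theta=\cH$ this new graph again has prescribed mean curvature $\cH$; moreover $I_\theta$ preserves the Hessian-determinant (Gaussian curvature) condition and fixes $p_0$, so $z^\theta\in\cA_1$. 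Thus $I_\theta$ induces an action on $\cA_1$, and by definition a graph is \emph{radial} precisely when it is fixed by every $I_\theta$.

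Next I would track what this action does on the canonical conformal parametrization. If $\psi(u,v):\Gamma_R\flecha\L^3$ is the canonical conformal parametrization of $\Sigma_z$, then $I_\theta\circ\psi$ is a conformal parametrization of $\Sigma_{z^\theta}$; its limit null curve is $I_\theta\circ b$, where $b(u)=A(u)(\cos u,-\sin u,1)$. Writing $I_\theta$ in coordinates, $I_\theta(x,y,z)=(x\cos\theta - y\sin\theta,\ x\sin\theta + y\cos\theta,\ z)$ (or with the sign convention that matches our orientation), one computes
$$
I_\theta\bigl(A(u)(\cos u,-\sin u,1)\bigr) = A(u)(\cos(u-\theta),-\sin(u-\theta),1).
$$
This is \emph{not yet} in canonical form because the angular coordinate has been shifted; but the reparametrization $u\mapsto u-\theta$ is exactly the one that restores the normalization $g(\tilde u,0)=\cos\tilde u+i\sin\tilde u$, and it extends to the $2\pi$-periodic biholomorphism $w\mapsto w-\theta$ of the strip. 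Hence the canonical conformal parametrization of $\Sigma_{z^\theta}$ is $\psi^\theta(u,v)=I_\theta\circ\psi(u+\theta,v)$, and its height function is $A^\theta(u)=A(u+\theta)$. In other words, $\Upsilon(z^\theta)(u)=\Upsilon(z)(u+\theta)$: rotation on $\cA_1$ corresponds to translation of the argument of $A$ on $\cA_2$.

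From this the corollary is immediate by bijectivity of $\Upsilon$: $z$ is radial $\iff z^\theta = z$ for all $\theta \iff \Upsilon(z^\theta)=\Upsilon(z)$ for all $\theta$ (injectivity) $\iff A(u+\theta)=A(u)$ for all $u$ and all $\theta$ $\iff A$ is constant; and $A$ is constant and nowhere zero exactly means $A\in\cA_2$ is a non-zero constant function. Conversely, given a non-zero constant $A$, surjectivity of $\Upsilon$ produces some $z\in\cA_1$ with $\Upsilon(z)=A$, and since $\Upsilon(z^\theta)=A$ too, injectivity forces $z^\theta=z$ for all $\theta$, so $z$ is radial. I expect the only genuinely delicate point to be verifying carefully that $I_\theta\circ\psi(\cdot+\theta,\cdot)$ really is the \emph{canonical} parametrization in the precise sense of Definition \ref{ccp} — i.e.\ checking the normalization $g(u,0)=\cos u+i\sin u$ survives and that the biholomorphism used is $2\pi$-periodic with $\Gamma_{R''}$ in its domain — but this is a direct check using \eqref{sefor} and \eqref{omegag} together with the explicit matrix of $I_\theta$, and amounts to little more than a change-of-angle bookkeeping. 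One should also fix the sign/orientation convention for $I_\theta$ so that it is consistent with the stereographic projection $\pi$ and the ``negatively oriented'' convention for the limit null curve; with the right convention the shift is $u\mapsto u\pm\theta$ and nothing else changes.
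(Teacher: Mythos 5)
Your argument is correct, and its core is the same equivariance that drives the paper's proof, but you organize it differently. The paper proves the two implications separately: for ``$A$ constant $\Rightarrow$ radial'' it invokes uniqueness of the solution to the Cauchy problem \eqref{cauchy} with rotationally invariant data to get $I_{\theta}(\psi(u,v))=\psi(u+\theta,v)$, i.e. \eqref{eqro}; for the converse it notes (without detail) that the canonical parametrization of a radial solution satisfies \eqref{eqro}, so $A$ is constant. You instead establish, for an \emph{arbitrary} $z\in\cA_1$, the equivariance identity $\Upsilon(z^{\theta})(u)=\Upsilon(z)(u+\theta)$ by tracking how $I_{\theta}$ acts on the canonical conformal parametrization and its limit null curve, and then deduce both directions purely formally from injectivity and surjectivity of $\Upsilon$; in particular you never need to return to the Cauchy problem, only to the uniqueness of the canonical parametrization (Lemma \ref{cano}) and Theorem \ref{main2}. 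The point you flag as delicate --- that $I_{\theta}\circ\psi(\cdot+\theta,\cdot)$ is again canonical --- is indeed the only thing to verify, and your computation is right: by \eqref{sefor} the boundary Gauss map of $I_{\theta}\circ\psi$ is $e^{i(u-\theta)}$, and the shift $w\mapsto w+\theta$ is a $2\pi$-periodic biholomorphism of the strip restoring $g(u,0)=\cos u+i\sin u$, so $A^{\theta}(u)=A(u+\theta)$. What each route buys: yours packages both implications into one group-equivariance statement and is arguably cleaner; the paper's direct use of Cauchy uniqueness makes the rotational symmetry of the resulting surfaces explicit, which it then exploits immediately in the explicit $H=1$ examples.
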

\begin{proof}
If $A(u)\in \cA_2$ is constant and $\cH$ is rotationally symmetric, then the solution $\psi(u,v)$ to the Cauchy problem \eqref{cauchy} is rotationally symmetric, since by uniqueness of the solution and rotational symmetry of the initial data it satisfies 
 \begin{equation}\label{eqro}
I_{\theta}( \psi(u,v)) = \psi(u+\theta,v)\end{equation} for every $\theta\in [0,2\pi)$. And conversely, it is easy to check that if $\cH$ is rotationally symmetric and $z\in \cA_1$ is an elliptic radial solution to \eqref{maineq}, then its canonical conformal parametrization $\psi(u,v)$ satisfies \eqref{eqro}, and thus $\Upsilon(z) \in \cA_2$ is a constant function. This easily implies by Theorem \ref{main2} that a graph $z\in \cA_1$ with a non-removable isolated singularity will be radial if and only if $A(u)=\Upsilon(z)\in \cA_2$ is constant.
\end{proof}
\begin{ejemplo*}
Let us make the choices $\cH=1$ and $A(u)= \pm\frac{1}{4}\in \cA_2$. In both cases, the graph $z\in \cA_1$ given by $z=\Upsilon^{-1}(A(u))$ in Theorem \ref{main2} is conformally parametrized by the unique solution to the Cauchy problem \eqref{cauchy}.
 
By Corollary \ref{radial}, this gives rise to  radially symmetric elliptic solutions to \eqref{maineq}, and thus to  rotationally invariant spacelike surfaces of constant mean curvature $H=1$ in $\L^3$.

We can parametrize such  solutions $\psi(u,v)$ to \eqref{cauchy} as
$$\psi(u,v)= (\cos(u) f(v),-\sin(u)f(v),h(v)),$$
where, in the case $A(u)= -\frac{1}{4}$, $f(v)=-\frac{1}{2}\tan(v/2)$ and $h(v)=-\frac{1}{2}(v-\tan(v/2))$.
In the case $A(u)=  \frac{1}{4}$,   the functions $f(v)$ and $h(v)$ are the unique solutions to the following Cauchy problem:
$$\left\{\begin{array}{lcl}
f'(v)& =& (1/16 + 3/2 f(v)^2 + f(v)^4)^{1/2},\\
 h'(v)& = &1/4 + f(v)^2, \\
 f(0)&=&0,\\ h(0)&=&0.\end{array}\right.$$

In Figure \ref{f1} we   show the surfaces obtained above.
\vspace{-0.5cm}
\begin{figure}[H]
\centering
\subfloat[Case $A(u)=-1/4$.]{
    \includegraphics[width=6cm]{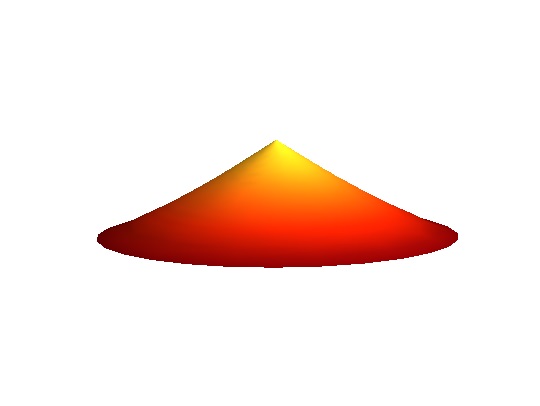}}
  \hfill
  \subfloat[Case $A(u)=1/4$.]{
    \includegraphics[width=6cm]{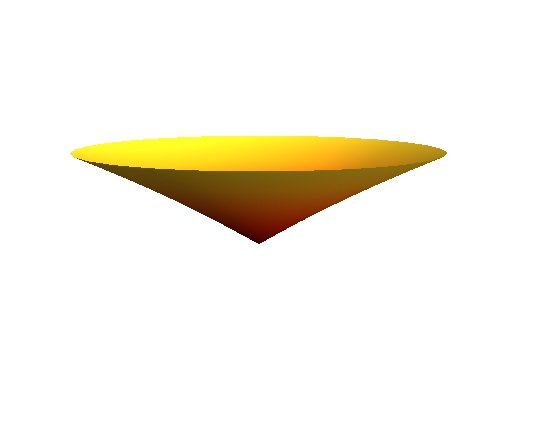} }
  \caption{Radial spacelike graphs with constant mean curvature $H=1$ and a conelike singularity.}
  \label{f1}
   \end{figure}
   
\end{ejemplo*}

 \end{document}